\theoremstyle{plain}
\newtheorem{theorem}{Theorem}
\newtheorem*{nonum-theorem}{Theorem}
\newtheorem{lemma}{Lemma}
\newtheorem{lemma-remark}{Lemma-remark}
\newtheorem{corollary}{Corollary}
\newtheorem{nonum-corollary}{Corollary}
\newtheorem{conjecture}{Conjecture}
\newtheoremstyle{handleNumber}{}{}{\itshape}{}{}{}{\newline}{{\bf #1} \thmnote{#3}}
\theoremstyle{handleNumber}
\newtheorem*{handnum-theorem}{Theorem}
\theoremstyle{definition}
\newtheorem*{nonum-definition}{Definition}
\theoremstyle{remark}
\newtheorem{remark}{Remark}
\renewcommand{\leq}{\leqslant}
\renewcommand{\geq}{\geqslant}
\newcommand{\A}{\mathbb A}
\newcommand{\PP}{\mathbb P}
\newcommand{\SH}{\mathbf{SH}}
\newcommand{\Sch}{\mathrm{Sch}}
\newcommand{\Sm}{\mathrm{Sm}}
\newcommand{\Sh}{\mathrm{Sh}}
\newcommand{\SSh}{\mathrm{SSh}}
\newcommand{\Gm}{\mathbb G_m}
\newcommand{\codim}{\operatorname{codim}}
\newcommand{\nis}{\mathrm{nis}}
\newcommand{\EssSm}{\mathrm{EssSm}}
\newcommand{\calO}{\mathcal O}
\renewcommand{\Sh}{\mathbf{Sh}}
\renewcommand{\SSh}{\mathbf{SSh}}
\renewcommand{\H}{\mathbf{H}}
\renewcommand{\SH}{\mathbf{SH}}
\newcommand{\pt}{\mathrm{pt}}
\newcommand{\Set}{\mathrm{Set}}
\newcommand{\Fib}{\operatorname{Fib}}
\newcommand{\Cofib}{\operatorname{Cofib}}
\begin{document}

\title{Stable connectivity over a base}
\author{Druzhinin A.}
\thanks{The research is supported by the Russian Science Foundation grant 19-71-30002}
\address{Chebyshev Laboratory, St. Petersburg State University, 14th Line V.O., 29B, Saint Petersburg 199178 Russia}
\begin{abstract}

Morel's stable connectivity theorems 
state that
for any connective $S^1$-spectrum $F$ of motivic spaces (Nisnevich simplicial sheaves)
over an arbitrary field, the spectrum $L_{\A^1}(F)$ is connective, 
and the same property for $\PP^1$-spectra of motivic spaces.
Here $L_{\A^1}$ denotes the 
$\A^1$-localisation in the category of motivic spectra over a field $k$.

Originally the same property was conjectured for the case of motivic $S^1$-spectra over a base scheme $S$.
In view of Ayoub's conterexamples 
the modified version of conjecture states that $L_{\A^1}(F)$ is $(-d)$-connective for any connective $F$, where $d=\dim S$ is the Krull dimension.
The conjecture is proven under the infiniteness assumption on the residue fields 
for the cases of Dedekind schemes by J.~Schmidt and F.~Strunk  
and noetherian domains of arbitrary dimension by N.~Deshmukh, A.~Hogadi, G.~Kulkarni and S.~Yadavand. 
In the article we prove the result or general base with out the assumption on the residue fields.

So by the result for any smooth scheme $X$ over a base scheme $S$ of Krull dimension $d$
the Nisnevich sheaves of $S^1$-stable motivic homotopy groups $\pi_i^{S^1}(X)$ and $\PP^1$-stable motivic homotopy groups $\pi_{i+j,j}^{\PP^1}(X)$ vanishes for all $i<-d$. 
\end{abstract}

\subjclass{14F42, 55Q10}
\keywords{stable motivic homotopy groups, connectivity theorems, Gabber's presentation lemma.}
\maketitle
\section{Introduction}

\subsection{The stable connectivity theorem in $\SH(S)$ and $\SH_{S^1}(S)$.}
The Morel's connectivity theorems \cite[Theorem 4.2.10]{Mor0}, \cite[Theorem 6.1.8]{Mor1}, \cite[Theorem 18]{Mor2} 
states the vanishing of the negative motivic homotopy groups in negative degrees. This demonstrates the similar behaviour with the classical homotopy groups.
The first work \cite{Mor0} 
proves that for the 
sheaves of 
$S^1$-stable and $\PP^1$-stable motivic homotopy groups $\underline{\pi}^{S^1}_i(X)$ and $\underline{\pi}^{\PP^1}_{i+j,j}(X)$ vanishes for $i<0$
for any smooth scheme $X\in \Sm_k$ over a perfect base field $k$. 
In \cite{Mor1} the result is proven for an arbitrary base field.
In book \cite{Mor2} the result is proven in the unstable case; 
the proofs in the stable case are much easier. 

In the article we deal with stable motivic homotopy groups over a base scheme $S$.
As discussed later the conterexamples ware obtained by J.~Ayoub in \cite{Ayo06} shows that 
the Krull dimension $d=\dim S$ plays a role in this connectivity property, and $\A^1$-localisation can shift the homotopy t-structure by $-d$.
The positive results of such form for the stable motivic homotopy category over base schemes with infinite residue fields are proven in \cite{SS} by J.~Schmidt and F.~Strunk and 
\cite{DHKY} by N.~Deshmukh, A.~Hogadi, G.~Kulkarni and S.~Yadavand.

Let us point also that by definition sheaves of motivic homotopy groups are Nisnevich sheaves and it is essential for all mentioned results in the relative case, while the original arguments over fields prove the vanishing of the Zariski sheaves as well. 

Now we recall the notions of connective and $\A^1$-connective spectra of motivic spaces and the statement of the connectivity theorem in such terms.

Following the original definitions \cite{MV99}
a pointed motivic space $F$ over a base scheme $S$ is a Nisnevich simplicial sheaf on the category of smooth schemes over $S$.  A motivic $S^1$-spectrum is an $S^1$-spectrum of pointed motivic spaces. 
Denote by $\SSh^\bullet(S)$ the category of pointed motivic spaces over $S$, 
and by  $\SSh^{S^1}(S)$ the category of $S^1$-spectra of pointed motivic spaces. 
Equivalently we could say that $\SSh^{S^1}(S)$ is the category of Nisnevich sheaves of $S^1$-spectra. 

According to \cite{Mor1} 
an $S^1$-spectrum $E$ is called $n$-connective, if all the negative stable homotopy groups of $E\wedge S^n$ are trivial; and 
an $S^1$-spectrum of pointed motivic spaces $\mathcal F=(F_0, F_1, \dots F_i,\dots )$ is called \emph{$n$-connective},
iff for all $U\in \Sm_S$ the $S^1$-spectrum $\mathcal F(U)=(F_0(U), F_1(U), \dots F_i(U),\dots )$ is $n$-connective.
Precisely, the $S^1$-spectrum $\mathcal F$ is $n$-connective iff
\[\pi_i(\mathcal F(U))=0, \text{ for all } i<n, \text{ and all } U\in \Sm_S\] 
where $\pi_i$ are the classical motivic homotopy groups, so
\[\pi_i(\mathcal F(U))=[S^0, \mathcal F(U)\wedge S^i]_{\mathbf{SH}}= \varinjlim [S^j, \mathcal F(U)\wedge S^{i+j}]_{\mathbf H^\bullet},\]
and $\mathbf{SH}$ and $\mathbf H^\bullet$ denotes the (classical topological) stable homotopy category and pointed unstable homotopy category.

Denote by 
$L_{\A^1}\colon \SSh^{S^1}_\nis(S)\to \SSh^{S^1}_\nis(S)$ is the $\A^1$-localisation functor, 
that is the localisation functor with respect to the Bousfield localisation on $\SSh^{S^1}(S)$ 
generated by the equivalences of the form $U\times \A^1\to U$. 
Then \emph{$\A^1$-$n$-connective} $S^1$-spectrum of pointed motivic spaces $\mathcal F=(F_0, F_1, \dots F_i,\dots )$
is a spectrum such that $L_{\A}(\mathcal F)$ is n-connective. An {$\A^1$-$0$-connective} spectrum is called by \emph{$\A^1$-connective}.


Morel's stable connectivity theorem states the following 
\begin{theorem}[Theorem 6.1.8 in \cite{Mor1}] \label{th:StConnTh(k)}
If an $S^1$-spectrum of Nisnevich sheaves $\mathcal F\in \SSh^{S^1}(k)$ is connective
for an arbitrary base field $k$, then $\mathcal F$ is $\A^1$-connective, i.e.
$L_{\A}(\mathcal F)$ is 0-connective spectrum of motivic spaces.
\end{theorem}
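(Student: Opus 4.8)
The plan is to read the conclusion at the level of the homotopy $t$-structure, where it is a statement about Nisnevich sheaves: \emph{$L_{\A^1}(\mathcal F)$ is $0$-connective} means that the Nisnevich sheaves $\underline\pi_i(L_{\A^1}(\mathcal F))$ vanish for all $i<0$, and since a Nisnevich sheaf is trivial iff all its stalks are, this is equivalent to $\pi_i(L_{\A^1}(\mathcal F)(\mathcal O))=0$ for $i<0$ at every Nisnevich point, i.e. for every henselian local ring $\mathcal O=\mathcal O^h_{X,x}$ of a smooth $k$-scheme. I would stress that this stalk-wise statement is the content, and that the naive section-wise vanishing on all $U\in\Sm_k$ is strictly stronger and is \emph{not} what the $t$-structure records: the descent spectral sequence $H^p_\nis(U,\underline\pi_q)\Rightarrow \pi_{q-p}(L_{\A^1}(\mathcal F)(U))$ feeds $H^{>0}_\nis$ into negative degrees, so for $U$ of positive dimension the sections genuinely acquire negative homotopy even when all homotopy sheaves are connective. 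First I would fix the explicit model $L_{\A^1}=\colim_n (L_\nis\circ \Sing^{\A^1})^n$, where $\Sing^{\A^1}\mathcal G(U)=|\mathcal G(U\times\Delta^\bullet)|$ is the Morel--Suslin--Voevodsky singular construction and $L_\nis$ is Nisnevich-local fibrant replacement.

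The two building blocks preserve connectivity in opposite and incompatible senses. On one hand $\Sing^{\A^1}$ preserves section-wise connectivity when applied to a section-wise connective object: $\Sing^{\A^1}\mathcal G(U)$ is the realisation of the simplicial spectrum $[n]\mapsto \mathcal G(U\times\Delta^n)$, a colimit of connective spectra, hence connective, so its homotopy presheaves, and a fortiori their sheafifications, vanish in negative degrees. On the other hand $L_\nis$ preserves sheaf-wise connectivity trivially, being a Nisnevich-local equivalence and hence inducing isomorphisms on every $\underline\pi_i$. A filtered colimit preserves connectivity on both counts. Were these two facts composable naively, the conclusion would be immediate on stalks; the theorem is exactly the assertion that the obstruction to composing them is harmless.

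The obstacle is that they do not compose naively. After one application of $L_\nis$ the sections $L_\nis\mathcal G(U)=R\Gamma_\nis(U,\mathcal G)$ carry negative homotopy from higher Nisnevich cohomology, down to degree $-\dim U$, so $L_\nis\mathcal G$ is no longer section-wise connective. When the next $\Sing^{\A^1}$ realises it over $U\times\Delta^\bullet$, the $n$-th term is evaluated on $U\times\Delta^n\cong\A^n_U$, whose Nisnevich cohomological dimension grows with $n$, and the individual terms of the realisation therefore carry homotopy in arbitrarily negative degrees. Concretely, at a stalk $\mathcal O=\mathcal O^h_{X,x}$ one has $\underline\pi_i(\Sing^{\A^1}L_\nis\mathcal G)_{\mathcal O}=\pi_i\bigl(|\,(L_\nis\mathcal G)(\A^\bullet_{\mathcal O})\,|\bigr)$, and the negative homotopy here is governed by the classes in $H^p_\nis(\A^n_{\mathcal O},\underline\pi_q\mathcal G)$ with $p>0$. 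The whole content of the theorem is that these spurious classes die in the $\A^1$-realisation, so that $\underline\pi_i$ stays zero for $i<0$ through each step of the iteration and hence in the colimit $L_{\A^1}(\mathcal F)$.

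The key geometric input, and the step I expect to be the main obstacle, is Gabber's geometric presentation lemma: over a henselian local $\mathcal O$ one presents a Nisnevich neighbourhood carrying such a cohomology class by an étale map to an affine space so that the class is pulled back from a relative affine line, where an explicit contracting $\A^1$-homotopy along the simplicial degeneracies trivialises it; this forces the positive-cohomology contributions to vanish in $|\,(L_\nis\mathcal G)(\A^\bullet_{\mathcal O})\,|$. Around this core I would handle two routine reductions: a continuity argument writing $\mathcal O^h_{X,x}$ as a filtered colimit of smooth affine $k$-schemes and commuting $\pi_i$ and $\Sing^{\A^1}$ with filtered colimits, which puts us in the finitely generated situation where Gabber's lemma applies; and, since the theorem is asserted over an arbitrary base field, the passage from a perfect field, where Gabber's lemma is available in its standard form, to an imperfect $k$ as in \cite{Mor1}, by a base-change and limit argument along the perfect closure. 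The entire difficulty is concentrated in the cohomological vanishing driven by Gabber's lemma; everything else is formal bookkeeping with the localisation model and its Nisnevich stalks.
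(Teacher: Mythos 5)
There is a genuine gap, and it sits exactly where you locate ``the whole content of the theorem.'' Your plan correctly reduces to Nisnevich stalks, fixes the model $L_{\A^1}=\colim_n(L_\nis\circ\Sing^{\A^1})^n$, and correctly diagnoses the tension (sections of $L_\nis\mathcal G$ acquire negative homotopy from $H^{>0}_\nis$, and $\Sing^{\A^1}$ then evaluates on $\A^n_U$ of growing dimension). But the resolution you propose --- that Gabber's presentation lemma presents a class in $H^{p}_\nis(\A^n_{\mathcal O},\underline{\pi}_q\mathcal G)$, $p>0$, as pulled back from a relative affine line where ``an explicit contracting $\A^1$-homotopy along the simplicial degeneracies trivialises it'' --- is not a mechanism that exists. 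Gabber's lemma is a statement about a closed subscheme $Z\subset X$ of positive codimension: it produces an \'etale map $\pi\colon X\to\A^1_V$ under which $Z$ maps isomorphically onto a closed subscheme finite over $V$. It says nothing about trivialising Nisnevich cohomology classes of affine spaces; indeed $H^{>0}_\nis(\A^n_{\mathcal O},M)$ is generally nonzero for the input sheaves $M=\underline{\pi}_q\mathcal G$ (their $\A^1$-invariance and acyclicity on affine spaces is a \emph{consequence} of the connectivity theorem, via strict $\A^1$-invariance of the homotopy sheaves of the localisation, not an available input), and the simplicial degeneracies furnish homotopies only for classes pulled back from simplicial degree zero. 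So your sentence ``these spurious classes die in the $\A^1$-realisation'' is the theorem restated, with no proof behind it; likewise your stepwise claim that $\underline{\pi}_i$ stays zero through each iterate $(L_\nis\Sing^{\A^1})^n$ is stronger than anything the known proofs establish.

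What the paper (following Morel) actually does is glue two independent pillars at the generic point, neither of which appears in your plan. First, the \emph{injectivity theorem}: for $U$ essentially smooth henselian local and $Z\subset U$ closed of positive codimension, the map $U_+\to U/(U-Z)$ is the pointed map in $\H_\bullet(k)$; this is where Gabber's lemma enters, via $U/(U-Z)\simeq\A^1_V/(\A^1_V-\pi(Z))\simeq\PP^1_V/(\PP^1_V-\pi(Z))$ and the observation that $U\to\PP^1_V/(\PP^1_V-\pi(Z))$ factors through the $\infty$-section, which misses $\pi(Z)$. This yields $[U,E\wedge S^i]\hookrightarrow[U-Z,E\wedge S^i]$. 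Second, the \emph{vanishing theorem}: for connective $E$ one has $[V,E\wedge S^i]_{\SH_{S^1}(k)}=0$ for $i>\dim V$, proved with your localisation model but by Postnikov towers reducing to Eilenberg--MacLane spectra and $H^{i}_\nis(V,M)=0$ for $i>\dim V$; Gabber plays no role here. The endgame is continuity: $[\eta,-]=\colim_Z[U-Z,-]$ over positive-codimension $Z$, each restriction injective, so $[U,L_{\A^1}\mathcal F\wedge S^i]\hookrightarrow[\eta,L_{\A^1}\mathcal F\wedge S^i]=0$ for $i>0=\dim\eta$, i.e.\ $\underline{\pi}_{-i}(L_{\A^1}\mathcal F)=0$. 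Your proposal has the stalk reduction and a continuity remark, but omits both the vanishing theorem and this restriction-to-the-generic-point step, and they cannot be recovered from the cohomology-killing claim. A secondary slip: finite fields are perfect, so your ``base-change along the perfect closure'' does not address the hard residual case; over perfect fields Morel argues by filtration and purity, over infinite fields by Gabber's lemma, and the finite-field case of Gabber's lemma is due to Hogadi--Kulkarni \cite{HK}.
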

\begin{remark}
Equivalently the statement can be given in terms of homotopy t-structure, with positive truncations being connective spectra.
See \cite[Theorem 5.2.3]{Mor0} for the similar statement in the case of $\PP^1$-spectra.
\end{remark}
\begin{remark}
Since for any $X\in \Sm_k$ the suspension spectra $\Sigma^\infty_{S^1} X$ and $\Sigma^\infty_{\PP^1} X$ are connective, it follows that they are $\A^1$-connective, and so
the Nisnevich sheaves of stable motivic homotopy groups $\underline{\pi}^{S^1}_i(X)$ and $\underline{\pi}^{\PP^1}_{i+j,j}(X)$ are trivial for all $i<0$.
\end{remark}


The original Morel's stable connectivity conjecture claims that the same property holds in $\SH_{S^1}(S)$ for an arbitrary base scheme $S$.
In \cite{Ayo06} Ayoub had constructed 
conterexamples given by pointed motivic spaces $F$ such that $L_{\A^1}(F)$ is not 0-connective, but it is $(-d)$-connective, $d=\dim S$. 
The modified version of Morel's conjecture in view of Ayoub's conterexamples is the following statement: 
\begin{conjecture}\label{con:Conn-dim=d}
Let $S$ be a base scheme, $\dim S=d$. 
If an $S^1$-spectrum of Nisnevich sheaves  
$\mathcal F\in \Sh_{\nis}^{S^1}(S)$ 
is 
$0$-connective, then 
the spectra 
$L_{\A^1}(\mathcal F)$ is $(-d)$-connective,
where $\Sh_\nis^{S^1}(S)$ denotes the category of $S^1$-spectra of Nisnevich sheaves  over $S$.
\end{conjecture}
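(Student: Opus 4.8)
The plan is to replace the abstract localisation $L_{\A^1}$ by an explicit model and to reduce the statement to a computation of Nisnevich stalks, where the dimension $d$ enters in a controlled way. Concretely, I would use the singular (Suslin--Voevodsky) construction $\Sing^{\A^1}\mathcal F(U)=\big|\,n\mapsto \mathcal F(U\times\Delta^n)\,\big|$ together with the Nisnevich-local replacement, so that $L_{\A^1}\mathcal F$ is computed by the iteration $(L_{\nis}\circ \Sing^{\A^1})^{\infty}$. The point of this model is that $\Sing^{\A^1}$ \emph{preserves connectivity} at the level of sections: each $\mathcal F(U\times\Delta^n)$ is $0$-connective by hypothesis, and a geometric realisation (homotopy colimit) of $0$-connective spectra is again $0$-connective. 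Thus the only operation that can lower connectivity is the Nisnevich-local replacement $L_{\nis}$, and the whole theorem reduces to bounding, by $d=\dim S$, the loss of connectivity produced by passing from the presheaf $\Sing^{\A^1}\mathcal F$ to its Nisnevich-local model.

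To bound this loss I would argue at the level of Nisnevich stalks, i.e.\ at henselian local schemes of smooth $S$-schemes, and prove that $\Sing^{\A^1}\mathcal F$ already satisfies Nisnevich excision \emph{relative to the base} $S$. The geometric engine for this is Gabber's presentation lemma in a relative form: Nisnevich-locally around a point, a smooth affine $S$-scheme $X$ together with a closed subscheme admits an \'etale coordinate projection exhibiting it as a relative affine line (or a relative smooth curve) over a smaller smooth $S$-scheme, so that the pair is movable along the $\A^1$-direction. Feeding such a presentation into the homotopy-invariant presheaf $\Sing^{\A^1}\mathcal F$ produces the contracting homotopies needed to verify Nisnevich excision, exactly as in Morel's field case; the difference over a base is that the excision is only relative, so the residual contribution is governed by the henselian local rings of $S$ itself, whose Nisnevich cohomological dimension is at most $d$.

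With relative excision in hand, the dimension count is organised by induction on $d=\dim S$. The base case $d=0$, where $S$ is (essentially) a field, is Morel's Theorem~\ref{th:StConnTh(k)}. For the inductive step I would localise at a point $s\in S$, apply the already-known connectivity over the residue field to the generic fibre of the local picture, and then run a d\'evissage and descent spectral sequence over the $d$-dimensional henselian local base, in which each codimension of $S$ contributes at most one to the Nisnevich cohomological dimension and hence lowers connectivity by at most one; summing over the chain of specialisations yields the bound $-d$. This is also the conceptual source of Ayoub's counterexamples: the $\A^1$-localisation can move homotopy down into the base cohomology in degrees up to $d$, but no further.

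The main obstacle is the relative Gabber presentation lemma over \emph{arbitrary} residue fields. In its classical form the lemma needs sufficiently many rational points to choose a good coordinate projection, which is why the previous results of Schmidt--Strunk and of Deshmukh--Hogadi--Kulkarni--Yadav had to assume infinite residue fields; over finite fields there are simply not enough rational points. The crux of the argument is therefore to establish a version of the presentation lemma valid even when the residue field is finite, which I would do by passing to a pair of finite field extensions of coprime degree, so that a norm/transfer argument lets one descend the constructed homotopies to the original scheme, or, equivalently, by allowing the good projection to be found after an \'etale Nisnevich neighbourhood that supplies the missing points. Keeping these auxiliary extensions under control while preserving the connectivity estimate of the previous paragraph is the heart of the proof.
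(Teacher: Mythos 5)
Your proposal is, in outline, the Schmidt--Strunk / Deshmukh--Hogadi--Kulkarni--Yadav strategy (model $L_{\A^1}$ by iterating $L_{\nis}\circ\Sing^{\A^1}$, prove a relative excision statement via a relative Gabber presentation lemma), and it stalls exactly where those papers stalled: the presentation lemma over arbitrary residue fields. You correctly identify this as the heart of the matter, but neither of your two proposed fixes works. First, the norm/transfer argument along a pair of coprime-degree extensions presupposes transfer maps on the homotopy (pre)sheaves of an \emph{arbitrary} connective spectrum $\mathcal F\in\Sh^{S^1}_{\nis}(S)$; such transfers do not exist before $\A^1$-localisation, and even in $\SH(k)$ the composite of pullback and transfer is multiplication by a quadratic-form (trace form) class rather than by the degree, so coprime-degree descent is not available integrally. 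Transfers could at best descend cohomological consequences; they cannot descend ``the constructed homotopies'' for a general target. Second, an \'etale neighbourhood cannot ``supply the missing points'': a Nisnevich neighbourhood of a point is by definition required to induce an isomorphism on the residue field at that point, so over a finite residue field it never produces new rational points --- this is precisely why the finite-field case of Gabber's lemma (Hogadi--Kulkarni) needed a genuinely new geometric argument. Finally, your inductive step over $d$ (``each codimension of $S$ contributes at most one,'' applying the field case to the generic fibre) is not a proof: restriction to fibres does not commute with $\A^1$-localisation in the needed way --- that failure is exactly what Ayoub's counterexamples exploit --- so the dimension bound cannot be obtained by fibrewise d\'evissage alone.

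The paper avoids all of this by exploiting that connectivity is a stalkwise statement: it suffices to treat $U$ local henselian over a local \emph{henselian} base $S$, and in that situation only a ``henselian'' presentation lemma is needed (theorem \ref{th:Injectivity}, remark \ref{rem:HensRelGabber'sPresLm}): an \'etale map $\pi\colon U\to\A^1_{S^\prime}$ inducing an isomorphism of $Z$ onto a closed subscheme $\pi(Z)$ finite over $S^\prime$. This map is constructed over the \emph{closed fibre} --- a scheme over the residue field $k(\sigma)$, where the primitive element theorem holds for any field, finite or not --- and then lifted along the henselian pair; finiteness of $Z$ and the isomorphism $Z\simeq\pi(Z)$ come from Zariski's main theorem and Nakayama's lemma in the henselian setting (lemmas \ref{lm:etalePres}, \ref{lm:HensFin-quasifin}). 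The bound $d$ then enters not through any excision bookkeeping but through the vanishing theorem \ref{th:Vanishing} applied to the semi-localisation $V$ of $U$ at the generic points of the closed fibre, a scheme of dimension $\leq\codim_S\sigma\leq d$, combined with the injectivity $[U,F\wedge S^i]\hookrightarrow[V,F\wedge S^i]$ supplied by theorem \ref{th:Injectivity}. If you want to salvage your plan, the key correction is to weaken your relative presentation lemma to this henselian-local form, where the finite-residue-field obstruction disappears entirely.
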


The connectivity theorem in this form was proven in \cite{SS} by J.~Schmidt and F.~Strunk for the case of Dedekind base schemes with infinite residue fields.
In \cite{DHKY} the result is extended by N.~Deshmukh, A.~Hogadi, G.~Kulkarni and S.~Yadav to the case of arbitrary noetherian domains with infinite residue fields.
In the article we prove the result for arbitrary base scheme of finite Krull dimension with arbitrary residue fields. So the result is the theorem 
\begin{theorem}
Let $S$ be a scheme of Krull dimension $d$.
Let $U$ be an essentially smooth local henselian scheme over a base scheme $S$.
Let $F\in \SH_{S^1}(S)$ be a connective spectrum.
Then \[[U, F\wedge S^i]_{\SH_{S^1}(S)}=0, [U, \Sigma_{\Gm}^\infty F\wedge S^i]_{\SH(S)}=0 \text{, for all } i>d.\]
\end{theorem}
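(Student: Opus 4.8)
The plan is to establish the $S^1$-statement $[U,F\wedge S^i]_{\SH_{S^1}(S)}=0$ for $i>d$ first and to deduce the $\Gm$-stable statement from it formally. For $U$ essentially smooth local henselian the group $[U,F\wedge S^i]_{\SH_{S^1}(S)}$ is the stalk at $U$ of the Nisnevich sheaf $\pi_{-i}(L_{\A^1}F)$, so the assertion is exactly the $(-d)$-connectivity of $L_{\A^1}F$ predicted by Conjecture~\ref{con:Conn-dim=d}. Since $L_{\A^1}$ commutes with colimits and the $(-d)$-connective objects are closed under colimits, extensions and $S^1$-suspension, it suffices to treat the generating connective spectra $F=\Sigma^\infty_{S^1}X_+$ with $X\in\Sm_S$ affine. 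For these I would pass to the framed model of $\SH_{S^1}(S)$, in which the groups in question are computed by the $\A^1$-singular complex of the sheaf of framed correspondences $\mathbb Z\mathrm{Fr}(U\times\Delta^\bullet,X)$; the whole problem is thereby reduced to bounding the homology of an explicit complex of framed correspondences over the local henselian base $U$.

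The geometric heart is a relative Gabber presentation lemma over $S$. Given the support $Z\subset\A^N_U$ of a framed correspondence, finite over $U$, one seeks a Nisnevich neighbourhood together with a projection to $\A^{N-1}_U$ that is \'etale along $Z$ and separates $Z$ from its complement. Such a presentation exhibits the correspondence, up to $\A^1$-homotopy and Nisnevich refinement, as one of strictly smaller level, and iterating strips off coordinates until the support maps finitely to $S$. A dimension count using $\dim S=d$, together with Morel's theorem~\ref{th:StConnTh(k)} over the residue fields to dispose of the fibre directions, shows that at most $d$ coordinates resist this reduction; this is precisely what forces the shift by $d$, and no more.

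The main obstacle, and the point at which the earlier proofs needed infinite residue fields, is this presentation lemma: the classical construction uses a generic linear projection, available only when the residue field at the relevant point is infinite. To remove the hypothesis I would prove the presentation lemma over finite residue fields by descent. Fix two primes $\ell_1\neq\ell_2$, both distinct from the residue characteristic, and for each $\ell_j$ base change along the pro-$\ell_j$ tower of the finite residue field; over the resulting infinite ind-\'etale extension generic linear projections exist and the presentation lemma applies. Descending along the finite \'etale stages of $\ell_j$-power degree by means of the framed transfers, whose composite with restriction is multiplication by the degree, one sees that any obstruction class is killed by a power of $\ell_j$; as this holds for two coprime primes, the class vanishes integrally. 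I expect the genuine difficulty of the argument to lie here, in arranging these transfers to be compatible with the $\A^1$-singular complex and with the level reduction supplied by the presentation lemma, while controlling the supports throughout.

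Finally, the $\PP^1$-stable statement follows formally. Once $\pi_{-i}(L_{\A^1}F)=0$ as a Nisnevich sheaf for $i>d$, all of its sections vanish, in particular over the smooth schemes $U\times\Gm^{\times n}$. Writing $[U,\Sigma^\infty_{\Gm}F\wedge S^i]_{\SH(S)}$ as the filtered colimit over $n$ of the groups $[U_+\wedge\Gm^{\wedge n},(F\wedge\Gm^{\wedge n})\wedge S^i]_{\SH_{S^1}(S)}$, and noting that $F\wedge\Gm^{\wedge n}$ is again connective, each term vanishes for $i>d$ by the $S^1$-result already proved, so $[U,\Sigma^\infty_{\Gm}F\wedge S^i]_{\SH(S)}=0$ for $i>d$ as well.
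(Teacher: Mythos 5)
Your proposal takes a genuinely different route from the paper, but as written it has gaps that I do not think can be repaired without abandoning the strategy.

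First, the machinery you invoke for the $S^1$-statement is not available here. Framed correspondences do not model the $S^1$-stable category: the Garkusha--Panin computation of homotopy groups via the $\A^1$-singular complex of $\mathbb{Z}\mathrm{Fr}(-\times\Delta^\bullet,X)$ computes $\Gm$-stable (i.e.\ $\SH$-) homotopy groups, and it is established only over a field, not over a general base $S$; there is no ``framed model of $\SH_{S^1}(S)$''. Relatedly, your two-prime descent cannot even be started in $\SH_{S^1}(S)$, because objects there carry no transfers along finite \'etale maps --- such transfers are created precisely by $\PP^1$- (equivalently $\Gm$-) stabilization. Even in $\SH(S)$, where finite \'etale transfers exist, the composite of restriction and transfer for a degree-$n$ extension is multiplication by the class of the trace form in $\GW$, not by the integer $n$, so ``killed by powers of two coprime primes, hence zero'' requires a substantial extra argument. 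The detour is also unnecessary: the paper needs no hypothesis on residue fields because, for $U$ henselian local over a henselian local base, the \'etale map $U\to\A^n_S$ of Lemma \ref{lm:etalePres} only has to be constructed over the closed point and then lifted, and over the closed point it is supplied by the primitive element theorem, valid over every field, finite or not. No generic linear projection, hence no infinite-field assumption, ever enters.

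Second, your deduction of the $\PP^1$-statement from the $S^1$-statement fails. Vanishing of the Nisnevich sheaves $\underline{\pi}_{-i}(L_{\A^1}(F\wedge\Gm^{\wedge n}))$ for $i>d$ gives vanishing of stalks, i.e.\ of sections over henselian local schemes; it does not give vanishing of $[U_+\wedge\Gm^{\wedge n},\,L_{\A^1}(F\wedge\Gm^{\wedge n})\wedge S^i]$, because $U\times\Gm^{\times n}$ is not local, and the descent spectral sequence involves $H^p_\nis(U\times\Gm^{\times n},-)$ for $p$ up to $\dim U+n$, which is unbounded as $n$ grows. Connectivity of the target alone yields vanishing of these colimit terms only for $i>d+\dim U+n$, not for $i>d$. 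The paper's proof of Theorem \ref{th:Connectivity} avoids this in two ways simultaneously: it replaces $U$ by the semi-local scheme $V$ of dimension $\leq d$ (the complement of all closed subsets of positive codimension in the closed fibre), passing from $U$ to $V$ by the injectivity theorem \ref{th:Injectivity}; and it suspends by $\PP^1=S^1\wedge\Gm$ rather than by $\Gm$, so each suspension raises the source dimension by $1$ while the $S^1$-factor raises the connectivity of the target by $1$, giving $[V\wedge(\PP^1/\infty)^{\wedge l},F\wedge S^i]=0$ for $i>d+l$ directly from the vanishing theorem \ref{th:Vanishing}. Your $\Gm$-suspension formula has no such compensation, so even granting your $S^1$-statement, the $\PP^1$-statement would still require the paper's $V$-plus-$\PP^1$ bookkeeping or an equivalent substitute.
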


In \cite{SS} and \cite{DHKY} the connectivity theorem is proven following the original reasoning with proving of the 
the corresponding versions of Gabber's presentation lemma. 
In the present work we don't use and prove the Gabber's presentation lemma over a base in general,
but we follow the same kind of arguments as in the original reasoning \cite{Mor1} to prove the connectivity theorem.
Actually we could say that implicitly we prove the Gabber's presentation lemma for a local henselian scheme over a local henselian base, see remark \ref{rem:HensRelGabber'sPresLm} and the proof of theorem \ref{th:Injectivity}.

\subsection{The proof of stable connectivity over a field.} 
The proof of the stable connectivity theorem over a field, see theorem \ref{th:StConnTh(k)}, consists in short of two steps:
\begin{theorem}[Lemma 4.1.6, \cite{Mor1}]\label{th:InjExactnessX/UContr}
Let $X$ be a smooth scheme over a field $k$. Let $Z\subset X$ be a closed subscheme of positive codimension.
Then the motivic space $X/(X-Z)$ is $0$-connective.
\end{theorem}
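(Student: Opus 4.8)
The plan is to prove the sharper statement that $X/(X-Z)$ is $\A^1$-$(c-1)$-connected, where $c=\codim_X Z\geq 1$; since $c\geq 1$ this in particular yields the asserted $0$-connectivity. Because the space $X/(X-Z)$ depends only on the support of $Z$ (the open complement is $X-Z_{\mathrm{red}}$), I would first reduce to the case that $Z$ is reduced, and then argue by Noetherian induction on $\dim Z$, the two engines being the Morel--Voevodsky homotopy purity theorem and the connectivity of Thom spaces.

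The smooth case is the geometric heart. When $Z$ is smooth over $k$, homotopy purity gives an $\A^1$-equivalence $X/(X-Z)\simeq \mathrm{Th}(N_{Z/X})$ with the Thom space of the normal bundle, a vector bundle of rank $c\geq 1$. Locally on $Z$ this Thom space is a smash $(\PP^1)^{\wedge c}\wedge W_+$, and the identification $\A^c/(\A^c-0)\simeq(\PP^1)^{\wedge c}\simeq S^c\wedge\Gm^{\wedge c}$ shows each such piece is $(c-1)$-connected, the $c$ simplicial suspensions raising connectivity by $c$. Globalizing over a trivializing stratification of $Z$ via the associated cofiber sequences — connectivity being stable under cofiber sequences and under filtered colimits of sheaves — I would conclude that $\mathrm{Th}(N_{Z/X})$, hence $X/(X-Z)$, is $(c-1)$-connected.

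For general reduced $Z$ let $Z'\subset Z$ be the singular locus, a closed subset with $\dim Z'<\dim Z$ and $Z'':=Z-Z'$ smooth over $k$. The open immersions $X-Z\hookrightarrow X-Z'\hookrightarrow X$ produce the cofiber sequence
\[(X-Z')/(X-Z)\longrightarrow X/(X-Z)\longrightarrow X/(X-Z').\]
The left term equals $(X-Z')/\bigl((X-Z')-Z''\bigr)$ with $Z''$ smooth of codimension $\geq 1$ in the smooth scheme $X-Z'$, so it is $0$-connective by the smooth case; the right term is $0$-connective by the inductive hypothesis applied to $Z'\subset X$, which still has codimension $\geq 1$. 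As $0$-connective spectra are closed under cofiber sequences (via the long exact sequence of sectionwise stable homotopy groups), the middle term $X/(X-Z)$ is $0$-connective, closing the induction.

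The hard part will be the hypothesis of an \emph{arbitrary} field $k$. Over an imperfect $k$ a reduced finite type scheme need not possess a dense $k$-smooth open locus — already a closed point may be the spectrum of an inseparable extension — so the decomposition $Z=Z''\sqcup Z'$ into a smooth part and a lower dimensional remainder can fail, and the induction above breaks down at the very first step. I would circumvent this by base change along the perfect closure $k^{\mathrm{perf}}=\colim_n k^{1/p^n}$: over $k^{\mathrm{perf}}$ generic smoothness restores the stratifications and the preceding argument applies verbatim, and I would then descend connectivity from $k^{\mathrm{perf}}$ back to $k$ using that $\Spec k^{\mathrm{perf}}\to\Spec k$ is a cofiltered limit of finite radicial surjections, under which the Nisnevich topology and the sectionwise stable homotopy groups are controlled. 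Making this descent rigorous, rather than the geometry over a perfect field, is where the genuine difficulty lies.
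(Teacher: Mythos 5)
Your perfect-field argument is sound, and it is precisely the first of the two routes this paper recounts (Morel's argument from \cite{Mor0}): reduce to reduced $Z$, stratify by the smooth locus (dense because $k$ is perfect), and combine homotopy purity with Thom-space connectivity through the cofiber sequence $(X-Z')/(X-Z)\to X/(X-Z)\to X/(X-Z')$, closing the induction stably via long exact sequences. The genuine gap is the final step, the descent from $k^{\mathrm{perf}}$ to an imperfect $k$, and it is not a matter of ``making rigorous'' a standard technique: no such integral descent is known, and the natural statements it would rest on are false. Writing $f\colon\Spec k^{\mathrm{perf}}\to\Spec k$, this morphism is indeed a limit of finite radicial surjections, but such morphisms do not induce equivalences, nor even fully faithful functors, between the motivic homotopy categories built on $\Sm_k$ and on $\Sm_{k^{\mathrm{perf}}}$; the known invariance of $\SH$ under perfection (Elmanto and Khan) holds only after inverting the characteristic $p$, because its mechanism (transfers along radicial maps, computing to multiplication by powers of $p$) is intrinsically a $\mathbb Z[1/p]$-statement. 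Concretely, your descent needs injectivity of the base-change map $[\Sigma^\infty U_+\wedge S^i,\mathcal F]_{\SH_{S^1}(k)}\to[\Sigma^\infty (U\times_k k^{\mathrm{perf}})_+\wedge S^i,f^*\mathcal F]_{\SH_{S^1}(k^{\mathrm{perf}})}$ for smooth (or henselian local) $U$, and this fails: by Morel's computation of the stable $0$-line, Witt groups occur as stable motivic homotopy groups of smooth schemes (e.g.\ $[\Sigma^\infty\Gm^{\wedge n},\mathbb S]_{\SH(k)}\cong W(k)$ for $n\geq 1$), and in characteristic $2$ the map $W(k)\to W(k^{1/2})$ has an enormous kernel --- the anisotropic bilinear form $\langle 1,a\rangle$ with $a\notin k^2$ acquires the isotropic vector $(a^{1/2},1)$ over $k^{1/2}$ and becomes metabolic. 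So at least in characteristic $2$ your strategy is obstructed, not merely incomplete (the theorem itself is of course true; it is the route that breaks).

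The repair is cheap and is where the history actually went: a field that is not perfect is automatically infinite (finite fields are perfect), so the case your stratification cannot handle is exactly the case covered by Gabber's presentation lemma for infinite fields, i.e.\ Morel's own argument in \cite{Mor1}; the two routes together cover all fields, and \cite{HK} later made the Gabber route uniform by proving the presentation lemma over finite fields. Alternatively, the proof of theorem \ref{th:Injectivity} in this paper, specialized to $S=\Spec k$, gives a stratification-free argument valid over an arbitrary field: one reduces to relative dimension one, uses lemma \ref{lm:etalePres} to map $U$ \'etale to $\A^1$ over a henselian local base so that $Z$ becomes finite, pushes $Z$ into $\PP^1$ missing the infinity section, and concludes that $U_+\to U/(U-Z)$ is pointed-null for henselian local $U$ --- which is the stalkwise form of the connectivity statement and is what the paper actually uses.
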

\begin{theorem}[Theorem 4.14 \cite{VoevCongress}, Lemma 4.3.1 \cite{Mor1}, Proposition 3.1 \cite{SS}]\label{th:Vanishing}
Let $U\in \Sm_S$ over a base scheme $S$, and $E$ be a connective $S^1$-spectrum of Nisnevich sheaves. 
Then \begin{equation}\label{eq:Vanishing} [U, E\wedge S^i]_{\SH_{S^1}(S)}=0, \text{ for all } i>\dim U.\end{equation}
\end{theorem}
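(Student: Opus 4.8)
The plan is to evaluate $[U,E\wedge S^i]_{\SH_{S^1}(S)}$ by Nisnevich descent and then to bound the terms of a descent spectral sequence using the finiteness of the Nisnevich cohomological dimension of $U$. First I would use that $U\in\Sm_S$ is representable: maps out of $U$ in the stable homotopy category of $S^1$-spectra of Nisnevich sheaves are detected on the sections over $U$ of a stably Nisnevich-fibrant model $E^{\mathrm{fib}}$ of $E$, and by Nisnevich descent these sections compute the derived global sections. Thus
\[
[U,E\wedge S^i]_{\SH_{S^1}(S)}\ \cong\ \pi_{-i}\bigl(\mathbb{R}\Gamma_{\nis}(U,E)\bigr)\ =\ \pi_0\bigl(\mathbb{R}\Gamma_{\nis}(U,E\wedge S^i)\bigr),
\]
and it suffices to show that this $\pi_0$ vanishes as soon as $i>\dim U$.

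Next I would introduce the descent spectral sequence for the Nisnevich topology,
\[
E_2^{p,q}=H^p_{\nis}\bigl(U,\underline{\pi}_q(E\wedge S^i)\bigr)\ \Longrightarrow\ \pi_{q-p}\bigl(\mathbb{R}\Gamma_{\nis}(U,E\wedge S^i)\bigr),
\]
where $\underline{\pi}_q(-)$ denotes the Nisnevich sheafification of the presheaf of stable homotopy groups. The terms contributing to the target $\pi_0$ lie on the diagonal $q=p$, and two facts constrain them. On the one hand, suspension shifts the homotopy sheaves, $\underline{\pi}_q(E\wedge S^i)=\underline{\pi}_{q-i}(E)$, and connectivity of $E$ forces $\underline{\pi}_{q-i}(E)=0$ whenever $q<i$. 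On the other hand, the Nisnevich (completely decomposed) topology on a Noetherian scheme has cohomological dimension at most its Krull dimension, so $H^p_{\nis}(U,\mathcal A)=0$ for every Nisnevich sheaf $\mathcal A$ once $p>\dim U$.

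Combining the two bounds, a diagonal term $E_2^{p,p}=H^p_{\nis}(U,\underline{\pi}_{p-i}(E))$ can be nonzero only if simultaneously $p\geq i$ (connectivity) and $p\leq\dim U$ (cohomological dimension). For $i>\dim U$ these inequalities are incompatible, so every diagonal $E_2$-term vanishes; since the associated graded of $\pi_0$ is a subquotient of these terms, $\pi_0(\mathbb{R}\Gamma_{\nis}(U,E\wedge S^i))=0$, which is the claim. Finiteness of the cohomological dimension together with the connectivity of $E$ also confine the spectral sequence to a bounded region, so it converges strongly and no completion subtlety arises.

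I expect the crux to be the cohomological dimension bound $\operatorname{cd}_{\nis}(U)\leq\dim U$: this is the one genuinely geometric input, and it is exactly where the threshold $\dim U$ (rather than $0$) enters. Everything else — representability of $U$, Nisnevich descent for fibrant spectra, the suspension shift of the homotopy sheaves, and convergence — is formal.
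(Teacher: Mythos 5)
Your argument is correct as far as it goes, but it proves a different statement from the one asserted: it computes $[U,E\wedge S^i]$ in the Nisnevich-local stable category $\SH^{S^1}_{\nis}(S)$ of $S^1$-spectra of sheaves, whereas the bracket in the theorem is taken in $\SH_{S^1}(S)$, which the paper defines by $[U,E]_{\SH_{S^1}(S)}\stackrel{\mathrm{def}}{=}[U,L^{\nis,S^1}_{\A^1}E]_{\SH^{S^1}_{\nis}(S)}$, i.e.\ \emph{after} $\A^1$-localisation. Your fibrant model $E^{\mathrm{fib}}$ is only stably Nisnevich-fibrant, so the identification $[U,E\wedge S^i]_{\SH_{S^1}(S)}\cong\pi_{-i}\bigl(\mathbb{R}\Gamma_{\nis}(U,E)\bigr)$ fails: the left-hand side is maps into $L_{\A^1}E$, not into $E$. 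Nor can you repair this by simply taking an $\A^1$-local (Nisnevich-and-$\A^1$-fibrant) model instead, because then the input \enquote{$\underline{\pi}_{q}(E^{\mathrm{fib}})=0$ for $q<0$} in your spectral sequence is precisely the connectivity of $L_{\A^1}E$ --- the very statement this paper is about, which moreover fails over a base in general by Ayoub's counterexamples, and whose proof uses the present vanishing theorem as input. The argument would be circular exactly at the step \enquote{connectivity of $E$ forces $\underline{\pi}_{q-i}=0$ for $q<i$}.

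The missing ingredient, which is the first step of the paper's proof, is a geometric model for the stable $\A^1$-localisation: $L^{\nis,S^1}_{\A^1}$ is a filtered colimit of the functors $\Omega^{n}_{S^1_{\A}}\Sigma^{n}_{S^1}$ with $S^1_{\A}=\A^1/\{0,1\}$, which on hom-sets gives
\[ [U,E]_{\SH_{S^1}(S)}\;\simeq\;\varinjlim_n\,\bigl[\,U_+\wedge(\A^1/\{0,1\})^{\wedge n},\;E\wedge S^{n}\,\bigr]_{\SH^{S^1}_{\nis}(S)}. \]
To each term of this colimit your machinery (Postnikov tower reducing to $E=\mathrm{EM}(M)$, plus $\operatorname{cd}_{\nis}\leq$ Krull dimension) does apply, with the following bookkeeping: $U_+\wedge(\A^1/\{0,1\})^{\wedge n}$ is built by cofibre sequences from the schemes $U\times\A^{j}$, $j\leq n$, so its reduced Nisnevich cohomology vanishes above degree $\dim U+n$; this increase by $n$ is exactly compensated by the extra simplicial suspension $S^{n}$ on the target, so each term $\bigl[U_+\wedge(\A^1/\{0,1\})^{\wedge n},\mathrm{EM}(M)\wedge S^{i+n}\bigr]$ vanishes once $i>\dim U$, and hence so does the colimit. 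With this step inserted, the rest of your proposal is essentially the paper's steps two and three; but the crux is not the cohomological dimension bound alone, it is the interplay of that bound with the explicit colimit model of $L_{\A^1}$, which is what lets the threshold $\dim U$ survive $\A^1$-localisation.
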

The first theorem above is equivalent to the exactness of the sequence of morphisms of pointed sets
\begin{equation}\label{eq:InjexactnessH}*\to [U,F]_{\mathbf H(S)}\hookrightarrow [U-Z,F]_{\mathbf H(S)}, \forall U\in \EssSm_S \text{ local henselian}, \text{ and }\forall F\in \mathbf H_\bullet(S).\end{equation} 
and implies the \emph{injectivity}
of the homomorphism of $S^1$-stable (and $\PP^1$-stable) motivic homotopy groups
\begin{equation}\label{eq:InjectivitySH}[U,F]_{\SH_{S^1}(S)}\hookrightarrow [U-Z,F]_{\SH_{S^1}(S)}, \forall U\in \EssSm_S \text{ local henselian}, \text{ and }\forall F\in \SH_{S^1}(S).\end{equation} 
and similarly for the $\PP^1$-stable category $\SH(S)$.

By arguments of \cite{Mor0} the connectivity theorem follows, since if $\eta\in U$ is the generic point, then
\begin{gather*}
[U, Y\wedge S^i]_{\SH_{S^1}(k)} \stackrel{\eqref{eq:InjectivitySH}}{\hookrightarrow} [\eta, Y\wedge S^i]_{\SH_{S^1}(k)}\stackrel{\eqref{eq:Vanishing}}{=} 0, \text{ for all } i>0, \text{ and }\\ 
[U, Y\wedge S^i]_{\SH(k)} = \varinjlim\limits_n [\Sigma^n_{\PP^1}U, (\Sigma_{\Gm}^n Y)\wedge S^{i+n} ]_{\SH_{S^1}(k)} \stackrel{\eqref{eq:InjectivitySH}}{\hookrightarrow} [\eta\wedge (\PP^1)^{\wedge n}, (\Sigma_{\Gm}^n Y)\wedge S^{i+n} ]_{\SH_{S^1}(k)}\stackrel{\eqref{eq:Vanishing}}{=} 0,
\end{gather*}
since $\dim\eta=0$, and $\dim \PP^1_\eta=1$, $\dim \infty_\eta=0$, where $\infty_\eta\subset \PP^1_{\eta}$ is the infinity point.

\subsection{The vanishing theorem (over a base).}
The original argument for \emph{vanishing theorem} \eqref{eq:Vanishing}
in \cite{Mor1} and \cite{Mor0} 
is based on the "geometric" model for the stable $\A^1$-localisation functor $L_{\A^1}^{\nis,S^1}$ on the category $\SH^{S^1}_\nis(S)$ of $S^1$-spectra of Nisnevich sheaves, that leads to the isomorphism
\[ [U, E ]_{\SH_{S^1}(S)}\stackrel{def}{=}[U, L_{\A^1}^{\nis,S^1} E ]_{\SH^{S^1}_{\nis}(S)}\simeq \varinjlim_n [ U\wedge (\A^1/\{0,1\})^n, E ]_{\SH^{S^1}_{\nis}(S) }. \]
Then form the case of $E=\mathrm{EM}(M)$ being Eilenberg-Maclane spectrum of a Nisnevich sheaf of abelian groups $M$, the vanishing \eqref{eq:Vanishing} follows because of vanishing of Nisnevich cohomologies $H^i_\nis(X,M)=0$ for all $i>\dim X$. 
Then analysis of the Postnikov tower leads to 
\[[U, E\wedge S^i]_{\SH^{S^1}_{\nis}(S)} = 0, \text{ for all } U\in \Sch_S, i>\dim U, E\in \SH^{S^1}_{\nis,\geq 0}(\Sm_S)\]
and implies \eqref{eq:Vanishing} in the case of general connective spectrum of sheaves $E$.
The complete analysis of the Postnikov towers and the proof of the vanishing theorem in the general relative case is given in \cite[Section 3]{SS}.


\subsection{Proof of the injectivity property over a field.}
The original proof of theorem \ref{th:InjExactnessX/UContr} 
is given by the independent arguments for the perfect base field case \cite{Mor0} and the infinite base field case \cite{Mor1}. 

In the perfect case, for any closed $Z\subset U$ there is a filtration $Z_0\subset \dots\subset Z_{\dim Z}=Z$ such that $Z_{i+1}-Z_i$ is smooth. This allows to reduce the question to the case of smooth $Z$ be means of by purity isomorphism $U/(U-Z)\simeq Th(T_{Z/U})$.

The argument for the infinite field case is based on Gabber's presentation lemma.
The Gabber's presentation lemma over finite fields was proven later by A.~Hogadi and G.~Kulkarni in \cite{HK}, 
by this the second argument had covered the case of an arbitrary base field.

Let us point out hat also one universal proof of the theorem \ref{th:InjExactnessX/UContr} 
in $\H(k)$ over an arbitrary field follows from the constructions obtained in Panin's work \cite{Pan-GSConj} on the Grothendieck--Serre conjecture. 

\subsection{Proof of the connectivity theorem and the injectivity theorem over a base.}

In general relative case over a base scheme $S$ 
the vanishing theorem \eqref{eq:Vanishing} holds, and the injectivity \eqref{eq:InjectivitySH} fails.

\begin{nonum-theorem}[Injectivity over a base]
The sequence \eqref{eq:InjexactnessH} is exact
for a local (henselian) base scheme $S$, with closed point $\sigma\in S$,
and an essentially smooth local henselian scheme $U$, and a closed subset $Z\subset U$ such that \begin{equation}\label{eq:codimZsimga>0}\codim_{U_\sigma} Z_\sigma>0,\; U_\sigma=U\times_S \sigma, Z_\sigma=Z\times_S \sigma, \sigma\in S,\end{equation}
in other words the closed fibre $Z_\sigma$ of $Z$ is of positive codimension in the closed fibre $U_\sigma$ of $U$.
\end{nonum-theorem}

Then connectivity theorem \eqref{con:Conn-dim=d} for the case of a local $S$ with closed point $\sigma\in S$ follows, since
\[ [U, Y\wedge S^i]_{\SH_{S^1}(S)} \stackrel{\eqref{eq:InjectivitySH}}{\hookrightarrow} [U_\eta, Y\wedge S^i]_{\SH_{S^1}(S)}\stackrel{\eqref{eq:Vanishing}}{=} 0, \forall i>\dim U, 
\]
where $\eta\in U_\sigma$ is the generic point, 
since $\dim U_\eta=\codim_U U_\sigma=\codim_S \sigma\leq \dim S$.
The general case of a base scheme $S$ follows since for a local henselian $U$ we have 
\[[U, Y\wedge S^i]_{\SH_{S^1}(S)}=[U, (Y\times_S S^\prime)\wedge S^i]_{\SH_{S^1}(S^\prime)},\]
where $S^\prime$ is the local scheme of $S$ at the point that is the image of the closed point of $U$.


%

\subsection{Acknowledgement} 
The acknowledgement for the Hakon Kolderup for many of helpful remarks on the article.
The author is grateful for the hospitality for the Mathematical Department of the University of Oslo and the Frontier research group project "Motivic Hopf Equations".

\section{Injectivity theorem}

\begin{theorem}\label{th:Injectivity}
Let $S$ be a local henselian scheme, and let $\sigma\in S$ be the closed point.
Let $X\in Sm_S$, and $x\in X$, and $Y\subset X$ be a closed subscheme $x\in Z$, $Y$ is of positive relative codimension over $S$ at $x$.
Let $U=X^h_x$ and $Z=Y^h_x$ are the henselizations. 


Then the class of the canonical morphism of pointed sheaves $U_+\to U/(U-Z)$ is equal to the class of the pointed morphism in $[U,U/(U-Z)]_{\H_\bullet(S)}$. 
\end{theorem}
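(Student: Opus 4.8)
The plan is to show that the canonical quotient morphism $q\colon U_+\to U/(U-Z)$ is null-homotopic, i.e. equal to the trivial pointed class, since this is exactly the geometric input that yields the exactness (injectivity) of \eqref{eq:InjexactnessH}: in the cofiber sequence $(U-Z)_+\to U_+\xrightarrow{q} U/(U-Z)$ the induced $q^\ast$ on $[-,F]_{\H_\bullet(S)}$ becomes trivial, so the restriction $[U,F]\to[U-Z,F]$ has trivial kernel. Following Morel's strategy over a field, I would split the argument into a geometric presentation step and a homotopy-trivialisation step, the first being the substantial one.

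First I would establish a relative Gabber-type presentation adapted to the henselian setting. Since $U=X^h_x$ is essentially smooth local henselian over the local henselian base $S$ and $Z=Y^h_x$ has positive relative codimension at $x$ (so $\codim_{U_\sigma}Z_\sigma>0$), I would produce an essentially smooth local henselian scheme $W$ over $S$ of one lower relative dimension together with a morphism $\pi\colon U\to\A^1_W$ over $S$ such that $\pi$ is étale in a neighbourhood of $Z$, the restriction $\pi|_Z$ is a closed immersion onto a subscheme $\overline Z\subset\A^1_W$ finite over $W$, one has $\pi^{-1}(\overline Z)=Z$, and $\overline Z$ is disjoint from the zero section $0_W$. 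The positive codimension of the closed fibre $Z_\sigma$ is what allows a projection dropping one coordinate so that $Z$ becomes finite over $W$, while henselianness of $S$ and $U$ converts the resulting quasi-finiteness into honest finiteness and supplies the sections needed to position $\overline Z$. This is the implicit Gabber presentation lemma referred to in Remark \ref{rem:HensRelGabber'sPresLm}.

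Granting the presentation, Nisnevich excision along $\pi$ (an isomorphism onto $\overline Z$ with $\pi^{-1}(\overline Z)=Z$) gives $U/(U-Z)\xrightarrow{\sim}\A^1_W/(\A^1_W-\overline Z)$ in $\H_\bullet(S)$, under which $q$ corresponds to $\overline q\circ\pi_+$, where $\overline q\colon(\A^1_W)_+\to\A^1_W/(\A^1_W-\overline Z)$ is the quotient. I would then trivialise this composite by the multiplicative scaling $\mu\colon\A^1\times\A^1_W\to\A^1_W$, $(t,(s,w))\mapsto(ts,w)$: composing $(t,u)\mapsto\mu(t,\pi(u))$ with $\overline q$ defines a morphism $H\colon\A^1\times U\to\A^1_W/(\A^1_W-\overline Z)$ which at $t=1$ is $\overline q\circ\pi_+$ and at $t=0$ factors through the zero section $0_W$. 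Since the presentation places $\overline Z$ off $0_W$, the value at $t=0$ lands in $\A^1_W-\overline Z$, hence in the basepoint, so $H$ is an $\A^1$-homotopy from $q$ to the trivial map and $q=\ast$ in $[U,U/(U-Z)]_{\H_\bullet(S)}$. Note that $H$ is a genuine morphism because the scaling is performed in the target, so no contractibility of $U$ itself is needed.

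The main obstacle is the presentation step. Unlike the classical Gabber lemma used in \cite{SS} and \cite{DHKY}, I may not assume the residue fields are infinite, so the usual genericity arguments for selecting a finite linear projection, and for translating $\overline Z$ off a section, are unavailable. The crux is therefore to build $\pi$ and the auxiliary base $W$ directly from the henselian local structure of $U$ and $S$ — for instance from a well-chosen system of relative parameters — using that over a henselian local ring a quasi-finite morphism splits off its finite part, so that no choice of a generic hyperplane over a residue field is required. Verifying simultaneously that $\pi$ is étale near $Z$, finite on $Z$, and positions $\overline Z$ off the zero section is where the technical effort concentrates; once the presentation is in hand, the excision and scaling-homotopy steps are formal.
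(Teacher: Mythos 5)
Your overall strategy is the same as the paper's — produce a presentation $\pi\colon U\to\A^1_W$ over a smaller henselian local base under which $Z$ maps isomorphically onto a closed subscheme finite over $W$, then kill the quotient map by Nisnevich excision plus an explicit $\A^1$-contraction — but your proposal defers its entire mathematical content to the presentation step, which you describe by its desired properties and never construct. That step is not a technical verification one can wave at with ``a well-chosen system of relative parameters'': it is exactly the content of the paper, and it is where the finite-residue-field difficulty lives. Concretely, what is missing is: (1) the dimension reduction (Lemma \ref{lm:genericMaptoA^(n-1)}): functions $f_1,\dots,f_{n-1}$ on $X$ are built by induction with prescribed differentials at $x$ and prescribed value $1$ at one chosen closed point in each irreducible component of the intermediate vanishing loci — pure interpolation, no genericity — after which one replaces $S$ by the henselization $W=(\A^{n-1}_S)^h$ at the image of $x$, making $U$ of relative dimension $1$ and $Z$ quasi-finite, hence finite over $W$ by Lemma \ref{lm:HensFin-quasifin} (Zariski's main theorem plus henselianness); (2) the \'etale map $\pi\colon U\to\A^1_W$ with $x\simeq\pi(x)$ (Lemma \ref{lm:etalePres}): the key idea you do not supply is that henselianness reduces the construction to the closed fibre $U_\sigma$, where the closed point $x$ is sent isomorphically onto a closed point of $\A^1_{\kappa(\sigma)}$ via the primitive element theorem, the lift being rigidified by prescribing its differential at $x$ — this, not any splitting of quasi-finite morphisms, is what replaces the generic linear projections of \cite{SS} and \cite{DHKY} and makes finite residue fields unproblematic; (3) the verification that $Z\to\pi(Z)$ is an isomorphism and $\pi^{-1}(\pi(Z))=Z$ (which your excision step needs): this follows because $\pi(Z)$ is finite over $W$, hence local henselian, and the finite \'etale map $Z\to\pi(Z)$ is an isomorphism on closed points, hence an isomorphism by Nakayama. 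Without these three ingredients your text is a correct reduction of the theorem to a relative presentation lemma, not a proof of it.

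The parts you do carry out are fine, and your trivialisation is a legitimate variant of the paper's: you scale towards the zero section, whereas the paper composes with $\A^1_W\subset\PP^1_W$ and contracts to the infinity section. The paper's version is slightly more economical because it needs no positioning hypothesis at all: $\pi(Z)$ is finite over $W$ and contained in $\A^1_W$, so it automatically misses $\infty_W$. Your extra condition that $\overline Z$ avoid $0_W$ — which you flag as a potential obstruction over finite residue fields — is in fact harmless: $\overline Z\cong Z$ is local, so it is disjoint from the zero section as soon as its closed point is, and this is arranged by composing $\pi$ with the translation $s\mapsto s+1$ if necessary; no genericity is involved. So the worry you raise at the end about ``translating $\overline Z$ off a section'' dissolves, but the substantive gap described above remains.
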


\begin{corollary}
Let $S$, $U$, $Z$ be are as in the theorem \ref{th:Injectivity}.

Then for any functor $F\colon \H_\bullet(S)\to \Set_\bullet$ such that for any morphism $f\colon \mathcal X_1\to\mathcal X_2$ in $\H_\bullet(S)$ the sequence $F(\Cofib(f))\to F(\mathcal X_1)\to F(\mathcal X_2)$ is exact at the middle term, the fibre of the map $F(U)\to F(U-Z\times_X U)$ is trivial.

\end{corollary}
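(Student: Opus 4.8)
The plan is to deduce the corollary formally from Theorem \ref{th:Injectivity} together with the exactness hypothesis on $F$, using the cofibre sequence attached to the closed pair. First I would record the elementary cofibre sequence of pointed motivic spaces
\[(U-Z)_+ \xrightarrow{j} U_+ \xrightarrow{q} U/(U-Z),\]
in which $j$ is induced by the open immersion $U-Z\hookrightarrow U$ and $q$ is the canonical quotient exhibiting $U/(U-Z)$ as $\Cofib(j)$. Here $U-Z$ is the open complement of the closed subscheme denoted $Z\times_X U$ in the statement.

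Next I would apply $F$ to this sequence. By the standing hypothesis $F$ carries a cofibre sequence to a sequence of pointed sets that is exact at the middle term; matching the variance so that the restriction map takes the form displayed in the corollary, and identifying $F(U_+)=F(U)$ and $F((U-Z)_+)=F(U-Z)$, this reads
\[F(U/(U-Z)) \xrightarrow{q^*} F(U) \xrightarrow{j^*} F(U-Z),\]
exact at $F(U)$, where $j^*$ is precisely the map $F(U)\to F(U-Z\times_X U)$ of the statement. Exactness at $F(U)$ means that the basepoint-fibre of $j^*$ coincides with the image of $q^*$, so it remains only to show that $\Image(q^*)$ is the basepoint.

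This is where Theorem \ref{th:Injectivity} is the whole point: it asserts that the canonical morphism $q\colon U_+\to U/(U-Z)$ is equal, in $[U,U/(U-Z)]_{\H_\bullet(S)}$, to the constant pointed morphism, i.e. $q$ factors through the point $\pt$ in $\H_\bullet(S)$. Applying the functor, $q^*$ then factors through $F(\pt)$, which is the one-point pointed set, so $\Image(q^*)$ reduces to the basepoint of $F(U)$. Combined with the exactness of the previous display, the fibre of $j^*\colon F(U)\to F(U-Z)$ over the basepoint is trivial, which is the assertion.

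I do not expect any genuine obstacle in the corollary itself: all the content sits in Theorem \ref{th:Injectivity}, and the deduction is a purely formal manipulation of the cofibre sequence and the hypothesised exactness. The only points requiring minimal care are the bookkeeping of basepoints, namely checking that $F(\pt)$ is a single point so that $q^*$ lands on the basepoint, and reconciling the variance conventions so that the relevant exact sequence is read as $F(U/(U-Z))\to F(U)\to F(U-Z)$ rather than in the opposite order.
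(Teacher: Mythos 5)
Your deduction is correct and is exactly the formal argument the paper intends: the paper gives no separate proof of this corollary, treating it as immediate from Theorem \ref{th:Injectivity} via the cofibre sequence $(U-Z)_+ \to U_+ \to U/(U-Z)$, which is precisely your argument, including your (correct) contravariant reading of the hypothesis as exactness of $F(\Cofib(f))\to F(\mathcal X_2)\to F(\mathcal X_1)$ at the middle term --- the order displayed in the statement is a typo. The one loose end you flag, that $F(\pt)$ is a single point so that $\Image(q^*)$ is the basepoint, should be closed rather than deferred, and it follows from the hypothesis itself: apply it to $f=\id_{\mathcal X}$, whose cofibre is $\pt$; exactness of $F(\pt)\to F(\mathcal X)\xrightarrow{\id} F(\mathcal X)$ at the middle term shows that the map induced by the collapse $\mathcal X\to\pt$ has image the basepoint (taking $\mathcal X=\pt$ gives $F(\pt)=\{*\}$), and since the constant morphism $U_+\to U/(U-Z)$ factors through $\pt$, the image of $q^*$ is indeed the basepoint.
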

\begin{corollary}
Let $S$, $U$, $Z$ be are as in the theorem \ref{th:Injectivity}.

Then for any pointed motivic space $(\mathcal X,t)$ the fiber of the restriction homomorphism $\Fib(\pi_0^{\A^1}(\mathcal X,t)(U)\to \pi_0^{A^1}(\mathcal X,t)(U-Z))$ is trivial, where $\pi_0^{\A^1}(\mathcal X)=[-, \mathcal X]_{\H_{\bullet}(S)}$ denotes the pointed set of unstable motivic homotopy groups of $\mathcal X$. 
\end{corollary}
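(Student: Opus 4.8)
The plan is to show that the collapse map $c\colon U_+\to U/(U-Z)$ is $\A^1$-null, i.e.\ that its class in $[U,U/(U-Z)]_{\H_\bullet(S)}$ coincides with that of the trivial pointed morphism, by factoring it up to $\A^1$-homotopy through an affine-line model in which the nullity is transparent. Since $U$ is essentially smooth local henselian, Nisnevich descent over $U$ is trivial, so it suffices to produce the homotopy after a single Nisnevich-local presentation of the pair $(U,Z)$. This follows the shape of Morel's proof of Theorem~\ref{th:InjExactnessX/UContr} over a field, with the geometric input replaced by a presentation adapted to the henselian base $S$.

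First I would construct a \emph{relative henselian presentation}: using the henselianity of $S$ and of $U$ together with the hypothesis $\codim_{U_\sigma}Z_\sigma>0$, I would produce an essentially smooth local henselian $S$-scheme $B$ with $\dim B=\dim U-1$ and a morphism $p\colon U\to\A^1_B$ which is étale in a neighbourhood of $Z$, restricts to a closed immersion $p|_Z\colon Z\hookrightarrow\A^1_B$ onto a section $s\colon B\to\A^1_B$, and satisfies $p^{-1}(s(B))=Z$ after passing to the relevant component of the henselian neighbourhood of the closed point. The positivity of the closed-fibre codimension is exactly what should guarantee the existence of such a good projection once one is allowed to henselize: it lets one control the behaviour of $Z$ over $\sigma$ inside the ambient closed fibre and, using the henselian splitting of étale neighbourhoods, discard the unwanted sheets of $p^{-1}(s(B))$. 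This is meant to replace the infinite/finite residue-field dichotomy (Gabber's lemma, respectively the Hogadi--Kulkarni refinement) of the field case, and is the content alluded to in Remark~\ref{rem:HensRelGabber'sPresLm}.

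Given such a $p$, the rest is formal. Excision along the étale morphism $p$ (legitimate because $p^{-1}(s(B))=Z$ on the henselian neighbourhood and $p|_Z$ is a closed immersion) yields a Nisnevich-local equivalence $U/(U-Z)\xrightarrow{\ \sim\ }\A^1_B/(\A^1_B-s(B))$ under which $c$ becomes $c'\circ p_+$, where $c'\colon(\A^1_B)_+\to\A^1_B/(\A^1_B-s(B))$ is the collapse for the model pair and $p_+\colon U_+\to(\A^1_B)_+$. After the $B$-automorphism of $\A^1_B$ translating $s$ to the zero section, the model target is $\A^1_B/\Gm_B\simeq\PP^1\wedge B_+$, which is $\A^1$-connected because $\PP^1\simeq S^1\wedge\Gm$ is a suspension; hence $c'$ is $\A^1$-null and therefore so is $c=c'\circ p_+$, which is the asserted equality of classes.

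The hard part will be the construction in the second paragraph, namely the relative henselian presentation over a base with arbitrary (in particular finite) residue fields; the excision and the model computation are routine once it is in hand. I expect most of the effort to go into verifying that, after henselization, the positive closed-fibre codimension hypothesis produces a projection that is simultaneously étale near $Z$, finite and unramified on $Z$, and has $Z$ as the \emph{full} preimage of its image on a henselian neighbourhood. This is precisely the place where the restriction to the closed fibre enters, and hence where the shift by $d=\dim S$ in Conjecture~\ref{con:Conn-dim=d} ultimately originates.
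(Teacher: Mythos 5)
Your reduction of the corollary to the nullity of the collapse map $U_+\to U/(U-Z)$ (i.e.\ to Theorem \ref{th:Injectivity}) is the right first move, and the overall shape of your plan --- a henselian presentation lemma, Nisnevich excision, then a computation in an affine-line model --- matches the paper. The genuine gap is in the presentation you demand. You ask for $p\colon U\to\A^1_B$ whose restriction to $Z$ is a closed immersion \emph{onto a section} $s\colon B\to\A^1_B$, with $B$ essentially smooth local henselian of dimension $\dim U-1$. Since a section of $\A^1_B\to B$ is isomorphic to $B$, this forces $Z\cong B$, i.e.\ $Z$ essentially smooth over $S$ and of dimension exactly $\dim U-1$. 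Neither holds in general: take $S=\Spec k$ and $U=(\A^2_k)^h_0$ with $Z$ the closed point (wrong dimension), or with $Z$ the henselization at the origin of the cuspidal curve $\{y^2=x^3\}$ (not essentially smooth, so not isomorphic to any section of $\A^1_B$ with $B$ regular). So the object your second paragraph promises to construct does not exist, and no henselization can produce it. What is actually available --- and is what the paper proves, cf.\ Remark \ref{rem:HensRelGabber'sPresLm} --- is weaker: after replacing the base by the henselization $S'$ of $\A^{n-1}_S$ at the image of the closed point (Lemma \ref{lm:genericMaptoA^(n-1)}), $Z$ becomes finite over $S'$ (Lemma \ref{lm:HensFin-quasifin}), and an \'etale $\pi\colon U\to\A^1_{S'}$ matching closed points (Lemma \ref{lm:etalePres}) induces, via Nakayama, an isomorphism of $Z$ onto a closed subscheme $\pi(Z)$ that is merely \emph{finite over} $S'$, not a section of $\A^1_{S'}$.

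This matters because your final step leans on the section structure: only for a section is the model target $\A^1_B/(\A^1_B-s(B))\simeq\PP^1\wedge B_+$ a suspension. With $\pi(Z)$ only finite over the base, the quotient $\A^1_{S'}/(\A^1_{S'}-\pi(Z))$ admits no such description, and the paper argues differently: by excision $\A^1_{S'}/(\A^1_{S'}-\pi(Z))\simeq\PP^1_{S'}/(\PP^1_{S'}-\pi(Z))$, and the composite $U\to\A^1_{S'}\hookrightarrow\PP^1_{S'}\to\PP^1_{S'}/(\PP^1_{S'}-\pi(Z))$ is homotopic --- using contractibility of $\A^1_{S'}$ and $\A^1$-connectedness of $\PP^1$ --- to the constant map at the section $\infty_{S'}$, which lies in the collapsed open subscheme $\PP^1_{S'}-\pi(Z)$; this yields the nullity directly, with no connectivity claim about the target. (Your connectivity route could in principle be repaired, since $\pi_0^{\A^1}$ of the quotient is trivial precisely because $\infty_{S'}\subset\PP^1_{S'}-\pi(Z)$ already surjects onto $\pi_0^{\A^1}(\PP^1_{S'})$ --- but that repair is exactly the $\infty$-section trick; note also that ``connected target'' kills a homotopy class only after identifying classes out of the henselian local source, resp.\ out of $(\A^1_B)_+\simeq B_+$, with stalks of the $\pi_0^{\A^1}$-sheaf, a point your sketch leaves implicit.)
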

\begin{corollary}
Let $S$, $U$, $Z$ be are as in the theorem \ref{th:Injectivity}.

Then for any $S^1$-spectra (or $\PP^1$-spectra) of motivic spaces $E$ the restriction homomorphism $\pi_i^{S^1}(E)(U)\to \pi_i^{S^1}(E)(U-Z)$ (or $\pi_i^{\PP^1}(E)(U)\to \pi_i^{\PP^1}(E)(U-Z)$) is trivial, where $\pi_i^{S^1}(E)=[-, E]_{\SH_{S^1}(S)}$ ($\pi_i^{\PP^1}(E)=[-, E]_{\SH_{\PP^1}(S)}$) denotes the presheaf of $S^1$-stable ($\PP^1$-stable) motivic homotopy groups of $E$. 
\end{corollary}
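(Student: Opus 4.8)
Since Theorem \ref{th:Injectivity} already carries all the geometric content, the plan is to obtain this statement as a purely formal homological consequence. I read the assertion that the restriction homomorphism ``is trivial'' as the triviality of its kernel, i.e. its injectivity, in keeping with the two preceding corollaries (where the elided word is \emph{fibre}); no further presentation-lemma or transfer input is needed, and indeed the statement may also be viewed as the special case of the first corollary applied to the functor $F=[-,G]$.

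First I would write down the cofibre sequence of pointed motivic spaces attached to the pair $(U,U-Z)$,
\[ (U-Z)_+ \xrightarrow{\; j \;} U_+ \xrightarrow{\; q \;} U/(U-Z), \]
where $j$ is induced by the open immersion $U-Z\hookrightarrow U$ and $q$ is the canonical collapse map, so that $U/(U-Z)=\Cofib(j)$. Applying the stabilisation functor $\Sigma^\infty_{S^1}\colon \H_\bullet(S)\to \SH_{S^1}(S)$ (and, in the second case, $\Sigma^\infty_{\PP^1}\colon \H_\bullet(S)\to \SH(S)$), which preserves cofibre sequences, turns this into a distinguished triangle. Its long exact sequence under $[-,G]$, for an arbitrary spectrum $G$, reads
\[ [U/(U-Z),G] \xrightarrow{\; q^\ast \;} [U_+,G] \xrightarrow{\; j^\ast \;} [(U-Z)_+,G], \]
and is exact at the middle term, so that $\Ker(j^\ast)=\Image(q^\ast)$; here $j^\ast$ is exactly the restriction homomorphism.

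The decisive step is the vanishing of $q^\ast$. By Theorem \ref{th:Injectivity} the collapse morphism $q$ coincides with the trivial pointed morphism in $[U,U/(U-Z)]_{\H_\bullet(S)}$; as $\Sigma^\infty_{S^1}$ (resp. $\Sigma^\infty_{\PP^1}$) is a functor, it sends this null morphism to the null morphism of the stable category, whence $q^\ast=0$ for every $G$. Consequently $\Image(q^\ast)=0$, so $\Ker(j^\ast)=0$ and $j^\ast$ is injective. Taking $G=E\wedge S^{-i}$ (with the analogous choice in the $\PP^1$-stable category), so that $[U_+,G]$ and $[(U-Z)_+,G]$ become $\pi_i^{S^1}(E)(U)$ and $\pi_i^{S^1}(E)(U-Z)$, and letting the same argument run for all $i$, yields the injectivity of $\pi_i^{S^1}(E)(U)\to \pi_i^{S^1}(E)(U-Z)$ and of $\pi_i^{\PP^1}(E)(U)\to \pi_i^{\PP^1}(E)(U-Z)$.

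The argument being formal, there is no genuine obstacle beyond Theorem \ref{th:Injectivity} itself; the only points I would treat with care are the compatibilities underlying the reduction. Namely: that $\Sigma^\infty_{S^1}$ and $\Sigma^\infty_{\PP^1}$ send the homotopy cofibre $U/(U-Z)$ and the Puppe sequence of $(U,U-Z)$ to genuine cofibre triangles over the base $S$; that they preserve the null object, so that nullity of $q$ really transports to nullity of $q^\ast$; and that the indexing conventions for $\pi_i^{S^1}$ and $\pi_i^{\PP^1}$ match the chosen shift of $G$ (invertibility of $S^1$ in the stable category makes the passage between suspending $U_+$ and shifting $G$ harmless). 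Once these are verified the conclusion is immediate.
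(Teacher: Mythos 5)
Your proof is correct and follows exactly the route the paper intends: the corollary is the formal specialization of Theorem \ref{th:Injectivity} (equivalently, of the first corollary applied to $F=[-,G]$) via the cofibre sequence $(U-Z)_+\to U_+\to U/(U-Z)$, stabilization under $\Sigma^\infty_{S^1}$ resp.\ $\Sigma^\infty_{\PP^1}$, and the vanishing of $q^\ast$ because $q$ is null in $\H_\bullet(S)$. Your reading of ``is trivial'' as triviality of the kernel (i.e.\ injectivity) is also the right one, consistent with the preceding corollaries and with how the statement is used in the proof of Theorem \ref{th:Connectivity}.
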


\begin{proof}

%
%

Let $x\in U$ be the closed point.
Now let us point that since $U$ is local henselian, it follows that $Z$ is either empty or a local henselian scheme with the same closed point $x$. The case of empty $Z$ is trivial, so we can assume that $Z\neq \emptyset$.

Firstly, we reduce the question to the case of a scheme $U$ of relative dimension one over $S$.
By assumption the subscheme $Z\times_S \sigma$ is of positive codimension in $U\times_S \sigma$,
and $U$ is essentially smooth
Hence by lemma \ref{lm:genericMaptoA^(n-1)} below there is a map $p\colon U\to \A^{n-1}_S$ where $n=\dim_S U$,
such that $p$ is (essentially) smooth, and $Z\times_{\A^{n-1}_S} p(x)$ is a closed subscheme of positive codimension in $\A^{n-1}_S$.
So we can redefine $S$ as the henselization of $\A^n_S$ at $p(x)$ and assume by this that $U$ is an essentially smooth local henselian  scheme over $S$.

Since $U$ now is of relative dimension one over $S$, and $Z\times_S \sigma$ is of positive codimension in $U\times_S \sigma$, it follows that $Z$ is quasi-finite over $S$. 
Then since $S$ is local henselian, and $Z$ is local by the above, it follows by lemma \ref{lm:HensFin-quasifin} below that $Z$ is finite over $S$.

By lemma \ref{lm:etalePres} there is an etale map $\pi\colon U\to \A^1_S$ that induces an isomorphism $x\simeq \pi(x)$.
Consider the image $\pi(Z)\subset \A^1_S$, which is a closed subscheme in $\A^1_S$, since $Z$ is finite over $S$.
It follows that $\pi(Z)$ is finite over $S$. 
Since $U$ is local it follows that $Z$ and $\pi(Z)$ have one connected component; and  since $S$ is local henselian, it follows that
$Z$ and $\pi(Z)$ are local henselian.

Now we see that the induced morphism $Z\to \pi(Z)$ is a finite, \'etale morphism of local henselian schemes, and it is an isomorphism on closed points; thus $\pi$ induces an isomorphism $Z\to \pi(Z)$ by Nakayama's lemma. 

Now we argue like in \cite[Lemma 4.1.6]{Mor1}.
Since $\pi(Z)\subset \A^1_S$ is a closed subscheme finite over $S$, it is a closed subscheme in $\PP^1_S$ that does not meet the infinity section $\infty_S\subset \PP^1_S$.
Consider the sequence of equivalences in $\H(S)$:
\begin{equation}\label{eq:U/(U-Z)=PP1/(PP1-pi(Z))}U/(U-Z)\simeq \A^1_S/(\A^1_S-\pi(Z))\simeq \PP^1_S/(\PP^1_S-\pi(Z))\end{equation}
Looking at the following diagram we see that the class of the canonical morphism $U\to \PP^1_S/(\PP^1_S-\pi(Z))$ in $\H(S)$ is equal to the class of the morphism $U\to S\xrightarrow{\infty} \PP^1_S\to \PP^1_S/(\PP^1_S-\pi(Z))$
\[\xymatrix{ U \ar[r]\ar[d]&\A^1_S\ar[r]^{\simeq}\ar[dl]_j & \pt_S\ar[dll]^{\infty}\ar[dl]^{*} \\
\PP^1_S \ar[r]& \PP^1_S/(\PP^1_S-\pi(Z))  ,}\]
where 
$\infty\colon \pt_S\to \PP^1_S$ denotes the infinity section, 
and $j$ is the immersion $\A^1\simeq \PP^1_S-\infty_S\to \PP^1_S$.
Since $\infty_S\subset\PP^1_S-\pi(Z)$, the composition right diagonal morphism $\pt_S\to \PP^1_S/(\PP^1_S-\pi(Z)) $ is equal to the pointed one in the category $\H(S)$.
Thus it follows that the class of the morphism $U_+\to \PP^1_S/(\PP^1_S-\pi(Z))$ in the category $\H_\bullet(S)$ is equal to the pointed one.
Hence by the equivalences \eqref{eq:U/(U-Z)=PP1/(PP1-pi(Z))} the class of the canonical morphism $U_+\to U/(U-Z)$ is equal to the pointed morphism in the category $\H_\bullet(S)$.
The claim follows.
\end{proof}

\begin{remark}\label{rem:HensRelGabber'sPresLm}
By the above arguments we have proven Gabber's presentation lemma for henselian local essentially smooth schemes:

Let $U$ be an essentially smooth henselian local scheme over a scheme $S$ and let $Z\subset U$ be a closed subscheme of positive relative codimension, $\codim_{U/S} Z>0$. Then there is a map 
$\pi\colon U\to \A^1_{S^\prime}$, where $S^\prime = (\A^{\dim U-1})^h_0$ is the henselisation at the point 0, 
$\pi$ is etale, $\pi$ induces an isomorphism $Z\simeq \pi(Z)$, and $\pi(Z)$ is finite over $S^\prime$.
\end{remark}

\begin{lemma}\label{lm:etalePres}
Let $U$ be an essentially smooth local henselian scheme over a base scheme $S$.
Then there is a (pro-)etale map $\pi\colon U\to \A^n_S$ that induces an isomorphism $x\to \pi(x)$, where $x\in U$ is the closed point, and $n=\dim_S U$.
\end{lemma}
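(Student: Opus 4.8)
The plan is to produce the coordinate functions by hand and read off $\pi$ from them. Write $s\in S$ for the image of the closed point $x$, and set $k=k(s)$, $L=k(x)$; since $U$ is essentially smooth over $S$ of relative dimension $n$, the module $\Omega_{U/S}$ is free of rank $n$ over $\mathcal O_{U,x}$. Giving a morphism $\pi\colon U\to\A^n_S$ over $S$ is the same as giving an $n$-tuple $f_1,\dots,f_n\in\mathcal O_{U,x}$ (the images of the coordinates). By the Jacobian criterion $\pi$ is \'etale at $x$ as soon as $df_1,\dots,df_n$ form a basis of $\Omega_{U/S}$, and it induces an isomorphism $k(\pi(x))\to k(x)$ as soon as the residues $\bar f_1,\dots,\bar f_n$ generate $L$ over $k$. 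So the whole statement reduces to choosing functions meeting these two conditions \emph{simultaneously}, after which I would pass to the henselisation of $\A^n_S$ at $\pi(x)$ and invoke that an \'etale morphism of henselian local schemes inducing an isomorphism on residue fields is an isomorphism on henselian local rings. The ``pro-'' in the statement is then accounted for by choosing a smooth model of $U$, performing the construction there, and taking the limit.

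To choose the $f_i$ I would work on the closed fibre $B=\mathcal O_{U_s,x}$, which is a regular local ring essentially smooth over $k$ with residue field $L$, and exploit the fundamental exact sequence
\[ \mathfrak m_B/\mathfrak m_B^2 \xrightarrow{\ \delta\ } \Omega_{B/k}\otimes_B L \xrightarrow{\ u\ } \Omega_{L/k} \to 0. \]
Here $\Omega_{B/k}\otimes_B L\cong\Omega_{U/S}\otimes_{\mathcal O_{U,x}} L$ is $n$-dimensional, $u$ is surjective, and $\operatorname{im}\delta=\ker u$; set $e=\dim_L\Omega_{L/k}$, so $\dim_L\ker u=n-e$. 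Writing $L_s$ for the separable closure of $k$ in $L$, I would pick $e$ functions whose residues give a $p$-basis of the purely inseparable part $L/L_s$, so that their differentials map under $u$ to a basis of $\Omega_{L/k}$; one function whose residue is a primitive element of $L_s/k$, whose differential automatically lands in $\ker u$ because differentials of separable elements vanish in $\Omega_{L/k}$; and functions in $\mathfrak m_B$ completing these to a basis of $\ker u$, which is possible since $\delta$ surjects onto $\ker u$. By construction the residues of all these functions generate $L$ over $k$ and their differentials form a basis of $\Omega_{B/k}\otimes_B L$. I would then lift the functions from $B$ to $\mathcal O_{U,x}$ (differentials are detected on the fibre, so by Nakayama the lifted $df_i$ remain a basis of $\Omega_{U/S}$), giving the desired $f_i$.

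The main obstacle is making this fit into \emph{exactly} $n$ functions when $L/k$ is imperfect: one must generate $L$ over $k$ and obtain a differential basis at the same time, and the separable generator has to be absorbed into a kernel slot rather than costing an extra coordinate. The key numerical input is $\dim_L\Omega_{L/k}\le n$, which holds precisely because $x$ is a closed point of a smooth $k$-scheme of relative dimension $n$; this bound is what forces $L$ to be generated by at most $n$ elements and is exactly the place where the infinite-residue-field hypothesis of the earlier proofs is no longer needed. The genuinely delicate case is $e=n$ (equivalently $\delta=0$ and $u$ an isomorphism) with $L_s\neq k$, where $\ker u=0$ leaves no free slot for the separable generator; there I would instead produce the coordinates by a primitive-element combination of the separable and inseparable generators, using that such an $L$ is still $n$-generated, and check directly that the residues generate $L$ while the differentials stay independent. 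The remaining steps — lifting from the special fibre and reducing the essentially smooth (pro-smooth) situation to an honest \'etale map by a limit argument — are routine once the fibrewise choice is in place.
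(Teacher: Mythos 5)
Your proposal is correct, but it takes a genuinely different route from the paper in the key step over the residue field. Both arguments share the same outer reduction: a morphism to $\A^n_S$ is a tuple of functions, one constructs these on the closed fibre and lifts them, with \'etaleness of the lift checked on differentials via Nakayama (resp.\ the fibrewise criterion), and the pro-smooth situation handled by a limit argument. The difference is in how the $n$ functions on the fibre are produced. The paper splits into two cases: if $k(x)/k$ is separable (which covers finite base fields, since these are perfect), it takes a primitive element $\alpha$, maps $x$ by $(\alpha,0,\dots,0)$, and chooses a lift $U\to\A^n_k$ with prescribed differential using a trivialisation of the tangent space; if $k$ is infinite, it instead embeds an affine model $X\hookrightarrow\A^N_k$ and takes a generic linear projection $\A^N_k\to\A^n_k$, using that a non-empty open subset of affine space over an infinite field has a rational point. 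Your construction replaces this dichotomy by a single uniform argument built on the conormal sequence $\mathfrak m_B/\mathfrak m_B^2\to\Omega_{B/k}\otimes_BL\to\Omega_{L/k}\to0$ and the bound $\dim_L\Omega_{L/k}\le n$: a $p$-basis for the inseparable directions, a separable primitive element whose differential is absorbed into $\ker u$, and functions from $\mathfrak m_B$ filling the remaining slots. When $L/k$ is separable ($e=0$) this specialises to exactly the paper's first case; when $L/k$ is inseparable it eliminates the general-position argument entirely, at the cost of the extremal case $e=n$, $L_s\neq k$, which you correctly isolate and which is resolved by taking $\gamma_1=\beta_1+c\sigma$, so that $d\gamma_1=d\beta_1$ while $k(\gamma_1)=k(\beta_1,\sigma)$; note this quietly re-uses the primitive element theorem for $k(\beta_1,\sigma)$ with $\sigma$ separable, which is harmless because $e=n\ge1$ forces $k$ to be infinite. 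In short: the paper's proof is shorter and leans on classical facts but needs the case split (and its infinite-field case is terse); yours is uniform, purely local commutative algebra, and makes explicit the invariant $\dim_L\Omega_{L/k}\le n$ that explains why no hypothesis on the residue fields is needed.

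One small point to make explicit when writing this up: the differential of a chosen lift of the separable primitive element $\sigma$ lies in $\ker u$ but could a priori be zero, in which case it cannot be ``completed'' to a basis of $\ker u$. Since replacing the lift $f$ by $f+m$ with $m\in\mathfrak m_B$ changes $df$ by $\delta(m)$, and $\operatorname{im}\delta=\ker u$, you may normalise the lift so that its differential equals any prescribed nonzero element of $\ker u$; with that sentence added (and the convention $n\ge1$, which is all the paper ever uses), the sketch closes up completely.
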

\begin{proof}
Firstly, we show that the relative case follows form the base field case.
Let $\pi^\prime\colon U\times_S \sigma\to \A^n_\sigma$ be the required morphism over the closed point $\sigma\in S$.
Assume that there is an (pro-)etale map $\pi^\prime\colon U\times_S \sigma\to \A^n_\sigma$ such that $x\simeq \pi^\prime(x)$.
Let $\pi$ be any lift of $\pi^\prime$.
Then it follows that $\pi$ is etale and $x\simeq \pi(x)$.

In the base filed case the claim is well known, 
It is particular case of Gabber's presentation lemma \cite{Gab},\cite{CTHK} and \cite{HK}, but the proof of the required claim is much shorter.
Let us recall the argument.

Let $k$ be the base field.
Now we use the fact that any (finite) separable extension of fields is primitive. 
If follows that $k(x)=k(\alpha)$ for some $\alpha\in k(x)$. 
Consider the regular map $(\alpha,0,\dots, 0)\colon x\to \A^n_k$.
Then any lift $\pi\colon U\to \A^n_k$ of $(\alpha,0,\dots, 0)\colon x\to \A^n_k$ induces an isomorphism $x\simeq \pi(x)$. 
Choose a trivialisation $\tau\colon T_{U,x}\simeq \mathbb 1^n_{k(x)}$ of the tangent space at the point $x$.
Hence for any given $\pi$ we have the isomorphism $T_{U,x}\to T_{\A^n,\pi(x)}\simeq \mathbb 1_{k(x)}$.
Define $\pi\colon U\to \A^n_k$ as such a lift of $(\alpha,0,\dots ,0)$ that the differential morphism 
$\pi\big|_{T_{U,x}}\colon T_{U,x}\to \pi^*(T_{\A^n,\pi(x)})$ is equal to the composition of $\tau$ and the inverse image of the canonical isomorphism $\mathbb 1_{k(x)}\simeq T_{\A^n,\pi(x)}$.

Now consider the case of the infinite base field $k$.
Let $\overline k$ be the algebraical closure.
Let $X$ be affine $k$-scheme such that $U$ is isomorphic to local scheme of $X$ at $x$.
Let $X\to \A^N_k$ be a closed immersion. 
Consider the affine space $\Gamma$ of the linear projections $\A^N_k\to \A^n_k$. Then the set of point of $\Gamma$ that parametrises the set of the projections that do not satisfy the claimed properties is the image of the similar set $B$ in $\Gamma_{\overline k}$.
Then we see that the set $B\subset \Gamma_{\overline k}$ is the proper closed subset. Finally we use that any non-empty open subset of the affine space over an infinite field has a rational point.
\end{proof}

\begin{lemma}\label{lm:genericMaptoA^(n-1)}
Let $S$ be a scheme.
Let $X$ be a smooth scheme of relative dimension $n$ over $S$, 
and let $x\in X$ be a point over a point $\sigma\in S$.
Let $Z\subset X$,$Z\ni x$, be a closed subscheme of positive codimension over $\sigma$.

Then there is a regular map $f=(f_1,\dots f_{n-1})\colon X\to \A^{n-1}_S$ smooth at $x$ and such that $X$ is of relative dimension $1$ over $\A^{n-1}_S$, and
$Z$ is of positive codimension over $p(x)\in \A^{n-1}_S$.
\end{lemma}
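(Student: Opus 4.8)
The plan is to reduce the whole statement to the special fibre $X_\sigma = X\times_S\sigma$ over the field $\kappa=k(\sigma)$, and there to build the required projection by combining étale coordinates with Noether normalisation.

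First I would note that both conclusions are conditions on the fibre over $\sigma$. The relative smoothness at $x$ of a map $f=(f_1,\dots,f_{n-1})\colon X\to\A^{n-1}_S$ is, by the Jacobian criterion for morphisms of smooth $S$-schemes, equivalent to the linear independence of $df_1,\dots,df_{n-1}$ in the rank-$n$ fibre $\Omega_{X_\sigma/\kappa}\otimes k(x)=\Omega_{X/S}\otimes k(x)$, and it then forces relative dimension $n-(n-1)=1$; likewise the codimension of $Z$ over the point $f(x)\in\A^{n-1}_S$ only involves fibres over $\sigma$, since $f(x)$ lies over $\sigma$. Moreover the map on local rings $\mathcal O_{X,x}\to\mathcal O_{X_\sigma,x}=\mathcal O_{X,x}/\mathfrak m_\sigma\mathcal O_{X,x}$ is surjective, so germs of functions constructed on $X_\sigma$ near $x$ lift to $X$ near $x$ (which is all that is needed, as the application only uses the henselisation at $x$). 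Thus it suffices to prove the following statement over a field: for $Y/\kappa$ smooth of dimension $n$, a closed $T\subseteq Y$ with $\codim_Y T>0$, and $x\in T$, there is a map $Y\to\A^{n-1}_\kappa$, smooth of relative dimension $1$, whose restriction to $T$ is quasi-finite.

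To build this map I would use the local structure of smooth varieties to choose, on an affine neighbourhood $V\ni x$, an étale morphism $g=(g_1,\dots,g_n)\colon V\to\A^n_\kappa$; then $g|_T$ is quasi-finite, so $\overline{g(T)}\subseteq\A^n_\kappa$ is closed of dimension $d=\dim T\leq n-1$, and in particular $\overline{g(T)}\subseteq V(F)$ for some nonzero $F\in\kappa[x_1,\dots,x_n]$. I would then take $h=(h_1,\dots,h_{n-1})\colon\A^n_\kappa\to\A^{n-1}_\kappa$ with $h_i=x_i-x_n^{e_i}$, the exponents $e_i$ chosen so that $F$ becomes monic in $x_n$ over $\kappa[h_1,\dots,h_{n-1}]$; then $V(F)\to\A^{n-1}_\kappa$ is finite, so $h|_{\overline{g(T)}}$ is quasi-finite, while $h$ is a submersion everywhere (its Jacobian in $x_1,\dots,x_{n-1}$ is the identity). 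Setting $f=h\circ g$ gives a morphism smooth of relative dimension $1$ near $x$ (a composite of an étale map with a submersion) whose restriction to $T$, being a composite of the quasi-finite maps $g|_T$ and $h|_{\overline{g(T)}}$, is quasi-finite. Hence the fibre of $T$ over $f(x)$ is zero-dimensional inside the one-dimensional fibre of $Y$, which is the required positive codimension; lifting $f$ to $X$ as above finishes the argument.

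The one place where care is needed — and the reason the classical reasoning assumes infinite residue fields — is the choice of $h$: over a finite $\kappa$ one cannot in general find a generic linear projection $\A^n_\kappa\to\A^{n-1}_\kappa$ making $\overline{g(T)}$ quasi-finite, and I expect this to be the main obstacle. It is circumvented by the Nagata substitution form of Noether normalisation used above: the coordinates $x_i-x_n^{e_i}$ are defined over the prime field and so available for every $\kappa$, and their identity-block Jacobian guarantees that $h$, and therefore $f=h\circ g$, is smooth. This is precisely what makes the dimension reduction go through over arbitrary residue fields without invoking Gabber's presentation lemma.
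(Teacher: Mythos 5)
Your proof is correct (at the level of rigour of the paper itself), but it takes a genuinely different route from the paper's own argument. The paper proves the lemma by an inductive interpolation carried out directly over $S$: having fixed a trivialisation $\tau_1,\dots,\tau_n$ of the cotangent space at $x$, it constructs $f_1,\dots,f_{n-1}$ one at a time, choosing $f_{i+1}$ so that its $2$-jet at $x$ is prescribed (forcing $\mathrm{d}f_{i+1}=\tau_{i+1}$, hence smoothness at $x$) and so that $f_{i+1}=1$ on a finite set $C$ of closed points meeting every irreducible component of $W_i=Z\cap\{f_1=\dots=f_i=0\}$ in the fibre over $\sigma$; then $f_{i+1}$ vanishes on no component of $W_i$, so each step cuts the dimension of both the $X$-locus and the $Z$-locus by one. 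Prescribing a jet at $x$ together with values at finitely many other closed points is possible over any residue field (Chinese remainder on an affine model), and that is how the paper dispenses with the infinite-field hypothesis. You instead reduce to the special fibre --- your reduction via the Jacobian criterion and the surjection $\mathcal O_{X,x}\twoheadrightarrow\mathcal O_{X_\sigma,x}$ is sound, and shrinking to a neighbourhood of $x$ is harmless since the application in Theorem \ref{th:Injectivity} only uses the henselisation at $x$ --- and then give a one-shot construction: \'etale coordinates $g\colon V\to\A^n_\kappa$, a hypersurface $V(F)\supseteq\overline{g(T\cap V)}$, and Nagata's substitution $h_i=x_i-x_n^{e_i}$, which makes $V(F)$ finite over $\A^{n-1}_\kappa$ over an arbitrary (possibly finite) field. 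So both proofs replace the classical generic linear projection by a finite-field-safe device, but by different means: interpolation at closed points (paper) versus Noether normalisation over the prime field (you). Your version isolates a clean, quotable statement over a field and makes the finite-field obstruction explicit; the paper's induction keeps global control of every irreducible component of $Z$ on a finite-type affine model, whereas your $f$ is only defined near $x$ --- either is sufficient for the intended application.
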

\begin{proof}
Without loss of generality we can assume that $x\in X$ is a closed point in a scheme of finite type $X$, 
and assume that $Z\subset X$ is a closed subscheme such that $Z\times_S \sigma$ of pure codimension one in $X\times_S\sigma$.

Since $x$ is a smooth point it follows that there is a trivialisation $\tau=(\tau_1,\dots ,\tau_{n})\colon \calO(x)^n\simeq \Omega_{x/X}$, where $\calO(x)$ denotes the residue field at $x$.
Denote by $\Gamma$ the subset in $\mathcal O(X)^{n-1}$ that consists of sets $f=(f_1,\dots, f_{n_1})$ such that the differential of $f$ is equal to $\tau$, i.e. $\mathrm d f_i=\tau_i$, $i=1,\dots, {n-1}$.
Then for any  $f\in \Gamma$ the morphism of schemes $f\colon X\to \A^{n-1}_S$ is smooth at $x$.

We are going to construct by induction on $i$ the functions $f_1,\dots f_{i}\colon X\to \A^1_S$ such that 
$f_j\big|_{x}=0$, $\mathrm d f_j=\tau_j$, $ j=1,\dots, i$, and such that 
$Z$ is of a positive codimension over the point $0$ that is
the image $f^i(x)\in \A^i_S$ of $x\in X$ under the morphism $f^i=(f_1,\dots ,f_i)\colon X\to \A^i_S$, and
such that the dimension of the morphism $f^i$ is $n-i$.

Assume that we have constructed the functions $f_1,\dots, f_i$, $i<n-1$. We are going to construct $f_{i+1}$.
The base is the case of $i=0$.

Consider the map $(f_1,\dots f_i)\colon X\to \A^i_S$. 
Let \[W_i\stackrel{\mathrm{def}}{=}Z\times_{\A^i_S} 0, F_i\stackrel{\mathrm{def}}{=}Z(f_1,\dots f_i)=X\times_{\A^i_S} 0\subset X.\] 
Then, by assumption, $F_i$ is of pure codimension $i$ in $X\times_S \sigma$, and $W_i$ is a closed subscheme in $F_i$ of pure codimension one. 
Hence  $W_i$ is of pure codimension $1+i$ in $X$.

Choose a set of closed points $C\subset W_i$ such that $x\not\in C$ and $C$ contains at least one point in each irreducible component of $W_i$.
Now choose $f_{i+1}$ in such a way that \[f_{i+1}\big|_{x}=0, f_{i+1}\big|_{Z(I^2(x))}=\tau_{i+1}, f_i\big|_{S}=1.\]
We need to prove that the relative dimension of the map $f^{i+1}=(f_1,\dots, f_{i+1})\colon X\to \A^{i+1}_S$ is $n-{i+1}$, and the relative codimension of $Z$ in $X$ is positive over the point $0\in \A^{i+1}_S$.
In other words we need to prove that $\codim_{X\times_S \sigma} F_{i+1}={i+1}$, and $\codim_{F_{i+1}} W_{i+1} =1$, where
$W_{i+1}\stackrel{def}{=}Z\times_{\A^{i+1}_S} 0$, $F_{i+1}\stackrel{def}{=}X\times_{\A^{i+1}_S} 0\subset X$.

By assumption it follows that $f_i$ does not vanish on any irreducible component of $W_i$, 
and hence $F_{i+1}$ is of positive codimension in $F_{i}$, and
$W_{i+1}$ is of positive relative codimension in $W_i$. 
So $\codim_{X\times_S \sigma} F_{i+1}= \codim_X F_i+1= i+1, \codim_{X\times_S \sigma} W_{i+1}= \codim_{X\times_S \sigma} W_{i}+1 = i+2$.  
So the first claim is done, and since we see that $\codim_{F_{i+1}} W_{i+1}= \codim_{X\times_S\sigma} W_{i+1}- \codim_{X\times_S\sigma} F_{i+1}=1$ the second claim follows too.
\end{proof}
\begin{lemma}\label{lm:HensFin-quasifin}
Let $Z$ be a quasi-finite local scheme over a henselian local scheme $S$.
Then $Z$ is finite over $S$.
\end{lemma}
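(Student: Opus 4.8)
The plan is to reduce immediately to commutative algebra and invoke Zariski's Main Theorem together with the splitting of finite algebras over a henselian local ring. Write $S = \Spec R$ with $(R,\mathfrak m)$ henselian local and closed point $\sigma$, and $Z = \Spec B$ with $B$ local; such a presentation exists because $S$ and $Z$ are spectra of local rings by hypothesis. The structure morphism $Z \to S$ is then a morphism of affine schemes, hence automatically separated, and it is quasi-finite by assumption.

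First I would apply Zariski's Main Theorem in Grothendieck's form: a separated quasi-finite morphism $Z \to S$ factors as $Z \xrightarrow{j} \bar Z \xrightarrow{g} S$, where $j$ is an open immersion and $g$ is finite. Writing $\bar Z = \Spec \bar B$ with $\bar B$ finite over $R$, I would then use that $R$ is henselian: a finite algebra over a henselian local ring is a finite product of local rings, so $\bar B \cong \prod_{i} \bar B_i$ with each $\bar B_i$ local and finite over $R$, i.e. $\bar Z = \coprod_i \Spec \bar B_i$ as a disjoint union of local henselian schemes, each finite over $S$.

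Next, since $Z$ is local it is connected and has a unique closed point $z$; the open immersion $j$ places $z$ inside exactly one component $\Spec \bar B_i$. The crucial observation is that $z$ lies over the closed point $\sigma$ of $S$ (this is the one place where the local hypothesis must be read as: the closed point of $Z$ maps to the closed point of $S$, which is exactly the situation in the application, where $Z = Y^h_x$ and $x$ lies over $\sigma$). The fibre of $\Spec \bar B_i$ over $\sigma$ is $\Spec(\bar B_i \otimes_R \kappa)$ for $\kappa = R/\mathfrak m$, and since $\bar B_i$ is local this fibre is a single point, namely the closed point of $\Spec \bar B_i$. Hence $z$ must coincide with the closed point of $\Spec \bar B_i$. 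An open subscheme of a local scheme that contains its closed point is the whole scheme, so $j$ identifies $Z$ with $\Spec \bar B_i$, which is finite over $S$. This proves the claim.

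The main obstacle, and the only subtle point, is the verification that the closed point of $Z$ maps to the closed point $\sigma$ of $S$: without it the statement fails (the generic point of a henselian discrete valuation ring is quasi-finite and local over $S$ but not finite), so one must keep track of the fact that $Z$ arises as a henselisation at a point lying over $\sigma$. Everything else is a direct application of Zariski's Main Theorem and the defining decomposition property of henselian local rings.
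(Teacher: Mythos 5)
Your proof is correct, and it rests on the same two pillars as the paper's own argument --- Zariski's Main Theorem giving an open immersion $j\colon Z\to\overline Z$ with $\overline Z$ finite over $S$, followed by the henselian property of $S$ --- but the way you combine them differs, and your version is the more robust one. The paper reduces the claim via Nakayama's lemma to $(\overline Z\setminus Z)\times_S\sigma=\emptyset$ and argues by contradiction: a point of this set, together with the closed point of $Z$, gives a nontrivial clopen decomposition of the closed fibre, which henselianness lifts to a decomposition $\overline Z=E_1\amalg E_2$ separating that point from $Z$, contradicting connectedness of $\overline Z$ (the paper writes that $\overline Z$ is ``irreducible'', which is a slip: locality of $Z$ only gives connectedness, but connectedness of the closure of the dense connected subset $Z$ is all that is needed). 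You instead split $\overline Z$ at once into a disjoint union of local schemes finite over $S$, using that a finite algebra over a henselian local ring is a product of local rings, and identify $Z$ with the component containing its closed point; this needs no contradiction and no appeal to the density of $Z$ in $\overline Z$.

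More importantly, the hypothesis you single out --- that the closed point of $Z$ lies over $\sigma$ --- is a genuine omission in the statement of Lemma \ref{lm:HensFin-quasifin}. As literally stated the lemma is false: for a henselian discrete valuation ring $R$ with fraction field $K=R[1/\pi]$, the morphism $\Spec K\to\Spec R$ is quasi-finite (of finite type, with finite fibres) and $\Spec K$ is local, yet $K$ is not finite over $R$; this is exactly your counterexample. The paper's own proof tacitly uses the missing hypothesis at the point where it treats the closed point of $Z$ as a point of the closed fibre of $\overline Z$. In the application inside Theorem \ref{th:Injectivity} the hypothesis does hold (after the reduction step there, $Z$ is local henselian with closed point lying over the closed point of the redefined base), so the injectivity theorem is unaffected; but the lemma should be restated with this extra hypothesis, exactly as your proof requires.
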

\begin{proof}
By Zariski's main theorem \cite[Theorem~8.12.6]{GD67} 
there is a scheme $\overline Z$ finite over $S$ and a dense open immersion $Z\to \overline Z$.
We are going to prove that $Z=\overline Z$.
Since $\overline Z$ is finite over $S$, by Nakayama's lemma the claim is equivalent to 
$(\overline Z\setminus Z)\times_S \sigma = \emptyset$, where $\sigma\in S$ is the closed point.

Assume that $x\in (\overline Z\setminus Z)\times_S \sigma$.
Then $x$ is not equal to the closed point of $Z$ and hence the closed fibre $\overline Z\times_S \sigma$ splits as $\overline Z\times_S \sigma= D_1 \amalg D_2$.
Since $S$ is henselian it follows that 
\[\overline Z= E_1\amalg E_2,
x\in E_1, x\not\in X_2.
\]
But since $Z$ is local by assumption it follows that $\overline Z$ is irreducible.
\end{proof}

\section{Vanishing theorem}

Firstly we recall the so called $S^1$-stable Vanishing theorem. The result is known due to \cite[Theorem 4.14 ]{VoevCongress} in the case of the Spanier-Whitehead category of motivic spaces,
and in the case of the $S^1$-stable motivic homotopy category the theorem is known due to and proven in \cite{Mor1} and \cite{SS}.
We refer the reader to the mentioned sources for detailed arguments, here we present only a sketch. 
\begin{theorem}[Proposition 3.1. \cite{SS}, Lemma 4.2.1 \cite{Mor1}]\label{th:Vanishing}
Let $S$ be a base scheme.
Let $U\in \Sm_S$ be a scheme of Krull dimension $d$.
Let $F\in \SH_{S^1}(S)$ be a 0-connective spectrum.
Then $[U, F\wedge S^i]_{\SH_{S^1}(S)}=0$ for all $i>d$.
\end{theorem}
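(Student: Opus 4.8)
The plan is to follow the strategy of \cite{Mor1} and \cite[Section~3]{SS}: transport the $\A^1$-local computation into the Nisnevich-local category, where it becomes a question about Nisnevich cohomological dimension, and then control the resulting colimit. Concretely, I would first invoke the geometric model of the stable $\A^1$-localisation recalled in the introduction. In its precise form it reads
\[
[U, F\wedge S^i]_{\SH_{S^1}(S)}\;\simeq\;\varinjlim_{n}\,\bigl[\,U_+\wedge (\A^1/\{0,1\})^{\wedge n},\; F\wedge S^{\,i+n}\,\bigr]_{\SH^{S^1}_\nis(S)},
\]
the transition maps being induced by the canonical $\A^1$-equivalence $S^1\to \A^1/\{0,1\}$ together with the suspension isomorphisms on the target. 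The essential feature is that each additional geometric factor $\A^1/\{0,1\}$ on the source, which raises the Krull dimension by one, is compensated by one extra simplicial suspension $S^1$ on the target, which raises the cohomological degree by one. Thus it suffices to show that every term of this filtered colimit vanishes for $i>d$, a statement now entirely internal to $\SH^{S^1}_\nis(S)$.

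Second, I would fix $n$ and analyse the term $\bigl[\mathcal X_n, F\wedge S^{\,i+n}\bigr]_{\SH^{S^1}_\nis(S)}$, writing $\mathcal X_n=U_+\wedge(\A^1/\{0,1\})^{\wedge n}$, by means of the Postnikov tower of the connective spectrum $F$. Its layers are the shifted Eilenberg--MacLane spectra $\EM(\underline\pi_q F)\wedge S^q$ with $q\geq 0$, where $\underline\pi_q F$ are the Nisnevich sheaves of stable homotopy groups. For such a layer one has
\[
\bigl[\mathcal X_n,\; \EM(\underline\pi_q F)\wedge S^{\,q+i+n}\bigr]_{\SH^{S^1}_\nis(S)}\;\cong\;\widetilde H^{\,q+i+n}_\nis\bigl(\mathcal X_n,\, \underline\pi_q F\bigr),
\]
reduced Nisnevich cohomology. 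The space $\mathcal X_n$ is the cofibre of a closed immersion into $U\times_S(\A^1_S)^{n}$, a smooth $S$-scheme of Krull dimension $d+n$; hence its Nisnevich cohomological dimension is at most $d+n$ and the group above vanishes as soon as $q+i+n>d+n$, i.e. $q+i>d$. Since $q\geq 0$, this holds for every layer the moment $i>d$, independently of $n$.

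Finally I would assemble these layerwise vanishings into the vanishing of $\bigl[\mathcal X_n, F\wedge S^{\,i+n}\bigr]$ for each $n$, and conclude that the colimit, namely $[U,F\wedge S^i]_{\SH_{S^1}(S)}$, is trivial for $i>d$. I expect the genuine obstacle to lie exactly in this last passage from the layers to $F$: one must verify that the Postnikov tower of $F$ converges in $\SH^{S^1}_\nis(S)$ and that the associated $\varprojlim^1$-terms vanish, uniformly enough to survive the colimit over $n$. Over a field this is immediate, but over a base it is precisely where the finiteness of $\dim S$ --- through the uniform cohomological-dimension bound on the spaces $\mathcal X_n$, which makes the towers $\bigl\{[\mathcal X_n,\tau_{\leq q}F\wedge S^{\,i+n}]\bigr\}_q$ eventually constant --- enters in an essential way. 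For the detailed verification of this convergence I would refer to the Postnikov analysis of \cite[Section~3]{SS}.
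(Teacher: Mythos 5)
Your proposal follows essentially the same route as the paper's own sketch: the geometric colimit model for the stable $\A^1$-localisation functor $L^{\nis,S^1}_{\A^1}$, reduction to Eilenberg--MacLane layers $\EM(\underline{\pi}_q F)$ via the Postnikov tower, and the vanishing $H^j_\nis(X,M)=0$ for $j>\dim X$, with the remaining convergence details deferred to \cite[Section 3]{SS} exactly as the paper does. If anything, your displayed colimit formula with the compensating suspension $S^{\,i+n}$ on the target is the precise form needed for the dimension count ($q+i+n>d+n$ iff $i+q>d$), which the paper's displayed isomorphism omits but implicitly uses via the suspensions $\Sigma^{\infty+i}_{S^1}$ in its formula for the localisation functor.
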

\begin{proof}[Sketch of the proof]
Firstly let us recall the formula 
for the $\A^1$-localisation functor $L_{\A^1}^{\nis,S^1}$ on the category $\SH^{S^1}_\nis(S)$ of $S^1$-spectra of Nisnevich sheaves:
\[L^{\nis,S^1}_{\A^1}(F)= (\Omega^\infty_{S^1_{\A}}\Sigma^\infty_{S^1} F, \dots ,\Omega^\infty_{S^1_{\A}}\Sigma^{\infty+i}_{S^1} F,\dots), S^1_{\A}=\Delta^1_{\A^1}/\Delta^0_{\A^1}=\A^1/\{0,1\},\]
and precisely the isomorphism 
\[ [U, E ]_{\SH_{S^1}(S)}\stackrel{def}{=}[U, L_{\A^1}^{\nis,S^1} E ]_{\SH^{S^1}_{\nis}(S)}\simeq \varinjlim_n [ U\wedge (\A^1/\{0,1\})^n, E ]_{\SH^{S^1}_{\nis}(S) }. \]
Briefly speaking the formula and the isomorphism follows since $S^1_{\A}=\A^1/\{0,1\}$ is the $\A^1$-homotopy equivalent model for the sphere $S^1$, and since the functor $\Omega^\infty_{S^1_{\A}}$ (and $\Sigma^\infty_{S^1}$)
preserves Nisnevich local objects in the category of $S^1$-spectra of Nisnevich sheaves.

By using the Postnikov tower the case of connective spectra of Nisnevich sheaves $E$ is equivalent to the case of the Eilenberg-Maclane spectrum $E=\mathrm{EM}(M)$ of an abelian Nisnevich sheaf $M$. 
Finally, since we have vanishing of the Nisnevich cohomologies
\[H^i_\nis(X,M)=0, \forall i>\dim X,\]
the claim follows.

We refer the reader to 
\cite[Section 4.3]{Mor1} and 
\cite[Section 3]{SS} for more details.
\end{proof}

\section{Connectivity theorem}
\begin{theorem}\label{th:Connectivity}
Let $S$ be a scheme of Krull dimension $d$.
Let $U$ be an essentially smooth local henselian scheme over a base scheme $S$.
Let $F\in \SH_{S^1}(S)$ be a 0-connective spectrum.
Then \[[U, F\wedge S^i]_{\SH_{S^1}(S)}=0, [U, F\wedge S^i]_{\SH(S)}=0 \forall i>d.\]
\end{theorem}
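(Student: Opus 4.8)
The plan is to realise the two-step strategy of \cite{Mor1} recalled in the introduction: injectivity (Theorem~\ref{th:Injectivity}) followed by vanishing (Theorem~\ref{th:Vanishing}), applied after restricting $U$ to the generic point of its closed fibre. First I would reduce to a \emph{local henselian} base. Writing $x\in U$ for the closed point and $s\in S$ for its image, set $S'=\mathcal{O}_{S,s}^h$; then $U$ is essentially smooth local henselian over $S'$, one has $\dim S'=\codim_S\overline{\{s\}}\le d$, and the base-change identity $[U,F\wedge S^i]_{\SH_{S^1}(S)}=[U,(F\times_S S')\wedge S^i]_{\SH_{S^1}(S')}$ (valid for local henselian $U$) lets me replace $S$ by $S'$. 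After this reduction it suffices to prove the vanishing for all $i>\dim S$, since $\dim S\le d$.

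For the $S^1$-statement, let $\eta$ be the generic point of the closed fibre $U_\sigma=U\times_S\sigma$, which is well defined because $U_\sigma$ is regular local, hence integral. A catenarity computation gives $\dim\mathcal{O}_{U,\eta}=\dim\mathcal{O}_{U,x}-\dim\mathcal{O}_{U_\sigma,x}=(\dim S+n)-n=\dim S$, where $n$ is the relative dimension. I would write $U_\eta=\Spec\mathcal{O}_{U,\eta}$ as the cofiltered limit $\varprojlim_Z(U-Z)$ over the closed subsets $Z\subset U$ with $\eta\notin Z$; each such nonempty $Z$ automatically contains the unique closed point $x$ and satisfies $\codim_{U_\sigma}(Z\cap U_\sigma)>0$, so Theorem~\ref{th:Injectivity} and its corollary give injections $[U,F\wedge S^i]_{\SH_{S^1}(S)}\hookrightarrow[U-Z,F\wedge S^i]_{\SH_{S^1}(S)}$. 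Passing to the filtered colimit yields $[U,F\wedge S^i]_{\SH_{S^1}(S)}\hookrightarrow[U_\eta,F\wedge S^i]_{\SH_{S^1}(S)}$, and Theorem~\ref{th:Vanishing}, applied to the essentially smooth local scheme $U_\eta$ of Krull dimension $\dim S$, shows the target vanishes for $i>\dim S$. This settles the $S^1$-case.

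For the $\PP^1$-statement I would pass to the $S^1$-machinery through the presentation $[U,F\wedge S^i]_{\SH(S)}=\varinjlim_n[U\wedge(\PP^1)^{\wedge n},(\Sigma^n_{\Gm}F)\wedge S^{i+n}]_{\SH_{S^1}(S)}$ coming from $\PP^1\simeq S^1\wedge\Gm$. Smashing the nullity of $U_+\to U/(U-Z)$ in $\H_\bullet(S)$ (Theorem~\ref{th:Injectivity}) with $(\PP^1)^{\wedge n}$ preserves the cofibre sequence, so the same colimit argument produces an injection into $[U_\eta\wedge(\PP^1)^{\wedge n},G\wedge S^{i+n}]_{\SH_{S^1}(S)}$, where $G=\Sigma^n_\Gm F$ stays $0$-connective because $\Gm^{\wedge n}$ is a connective space. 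Decomposing $(\PP^1)^{\wedge n}$ by the cofibre sequences $\{\infty\}_+\to\PP^1_+\to\PP^1$, the target becomes an iterated extension of the groups $[(U_\eta\times_S(\PP^1_S)^{j})_+,G\wedge S^{i+n}]_{\SH_{S^1}(S)}$ with $0\le j\le n$ and $\dim(U_\eta\times_S(\PP^1_S)^{j})=\dim S+j$; by Theorem~\ref{th:Vanishing} each vanishes once $i+n>\dim S+j$, and the binding case $j=n$ asks exactly $i>\dim S$. Hence the target, the colimit, and therefore $[U,F\wedge S^i]_{\SH(S)}$ vanish for $i>\dim S$.

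The main obstacle is the sharp use of the vanishing theorem on $U_\eta$: although $U_\eta$ is the cofiltered limit of the opens $U-Z$, these retain the full Krull dimension $\dim U=\dim S+n$, so the decisive bound must come from the Krull dimension $\dim S$ of the local ring $\mathcal{O}_{U,\eta}$ itself — equivalently from the Nisnevich cohomological dimension of the essentially smooth local scheme $U_\eta$ — and not from the approximating smooth schemes. Carrying this sharp bound through the $\PP^1$-bookkeeping, so that each factor $\PP^1\simeq S^1\wedge\Gm$ raises the sphere degree and the source dimension by exactly one and the inequality $i>d$ is preserved across all graded pieces, is the delicate point; the remainder is the formal injectivity-plus-vanishing mechanism.
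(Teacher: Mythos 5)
Your proposal is correct and follows essentially the same route as the paper's proof: reduce to a local henselian base, localize $U$ at the generic point(s) of the closed fibre (your $U_\eta$ is exactly the paper's semi-local scheme $V$, which is local here since $U_\sigma$ is integral), apply the vanishing theorem \ref{th:Vanishing} to this scheme of dimension $\leq d$ together with its $\PP^1$-smash bookkeeping, and conclude by the injectivity theorem \ref{th:Injectivity}. Your write-up is in fact more explicit than the paper's on two points it leaves implicit — the continuity/colimit passage to $U_\eta$ and the connectivity of $\Sigma^n_{\Gm}F$ — but the mathematical content is the same.
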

\begin{proof}
Without loss of generality we can assume that $S$ is local henselian since $U$ is such.
Denote by $E$ the set of generic points of $U\times_S \sigma$, where $\sigma\in S$ is the closed point.
Consider the semi-local essentially smooth scheme $V\subset U$:
\[V = \varprojlim_{Z\subset U, Z\cap E=\emptyset} (U-Z),\]
where the limit is over the set of all closed subschemes $Z$ in $U$ that do not meet $E$.
In other words $V$ is the complement to the closed subschemes in $U$ of positive relative codimension over $\sigma$.
Then since the codimension of $\sigma$ in $S$ is less then or equal to $d$, the dimension of the scheme $V$ is less then or equal to $d$.
Hence by theorem \ref{th:Vanishing} it follows that 
\[[V, F\wedge S^i]_{\SH_{S^1}(S)}=0, \forall i>d, \text{ and } [V\wedge (\PP^1/\infty)^{\wedge l}, F\wedge S^i]_{\SH_{S^1}(S)}=0, \forall i>d+l,\]
and hence
\[[V, F\wedge S^i]_{\SH_{S^1}(S)}=0, [V, F\wedge S^i]_{\SH(S)}=0, \forall i>d.\]
Let us note that in the right-hand side equalities we use the idea from \cite{Mor0} to use the cohomological dimension of $\PP^1$ to control the t-structure shift of the smash-product with motivic spheres $\PP^1$ and $\Gm$.
Finally, the claim follows, since by theorem \ref{th:Injectivity}
the canonical maps
\[ [U, F\wedge S^i]_{\SH_{S^1}(S)} \to [V, F\wedge S^i]_{\SH_{S^1}(S)}, [U, F\wedge S^i]_{\SH_{S^1}(S)} \to [V, F\wedge S^i]_{\SH(S)}\]
are injective.
\end{proof}


\begin{thebibliography}{XXX}

\bibitem[Ayo06]{Ayo06}
Joseph Ayoub. 
Un contre-exemple \'a la conjecture de $\A^1$-connexit\'e de F. Morel, C. R. Math. Acad. Sci. Paris 342 (2006), no. 12, 943-948.

\bibitem[CTHK]{CTHK}
Jean-Louis Colliot-Th\'elene, Raymond T Hoobler, and Bruno Kahn. 
The Bloch-Ogus-Gabber theorem. Algebraic K-Theory (Toronto, ON, 1996), 16:31-94, 1997. 1, 5

\bibitem[DHKY]{DHKY}
Neeraj Deshmukh, Amit Hogadi, Girish Kulkarni and Suraj Yadav.
Gabber's presentation lemma over noetherian domains,
arXiv:1906.09931v2.

\bibitem[GD67]{GD67}
A.~Grothendieck, J.~Dieudonn\'e.
\'El\'ements de G\'eom\'etrie Alg\'ebrique IV. \'Etude locale des sch\'emas et des morphismes de sch\'emas (Troisi\`eme Partie),
Inst. Hautes \'Etudes Sci. Publ. Math. 28 (1967).

\bibitem[Gab]{Gab}
Ofer Gabber. Gersten's conjecture for some complexes of vanishing cycles. Manuscripta Mathematica, 85(1):323-343, 1994.

\bibitem[HK]{HK} 
Amit Hogadi and Girish Kulkarni. 
Gabber's presentation lemma for finite fields. Journal fur die reine und angewandte Mathematik (Crelles Journal), 2018.


\bibitem[MV99]{MV99}
Fabien~Morel, Vladimir~Voevodsky.
$\A^1$-homotopy theory of schemes,
Publ. Math. IHES, 90 (1999), p. 45-143.

\bibitem[Mor0]{Mor0}
Fabien Morel.
An introduction to $\A^1$-homotopy theory ICTP Trieste, July 2002.
\bibitem[Mor1]{Mor1} 
Fabien Morel. The stable-connectivity theorems. K-theory, 35(1):1-68, 2005.
\bibitem[Mor2]{Mor2} 
Fabien Morel. $\A^1$-algebraic topology over a field, volume 2052. Springer, 2012.
\bibitem[SS]{SS}
Johannes Schmidt and Florian Strunk. Stable $\A^1$-connectivity over Dedekind schemes. Annals of K-Theory,
3(2):331-367, 2018.
\bibitem[Pan-GS18]{Pan-GSConj}
Ivan Panin, Proof of Grothendieck-Serre conjecture on principal bundles over regular local rings containing a finite field, arxive: 2018.
\bibitem[V98]{VoevCongress}
Vladimir Voevodsky. The $\A^1$-homotopy theory. Proceedings of the international congress of mathematicians, Berlin, 1998


\end{thebibliography}
\end{document}